\newtheorem{theorem}{Theorem}[section]
\newtheorem{lemma}[theorem]{Lemma}
\newtheorem{proposition}[theorem]{Proposition}
\newtheorem{corollary}[theorem]{Corollary}
\def\dist{\operatorname{dist}}
\def\rf{\rfloor}
\def\lf{\lfloor}
\def\a{\alpha}
\def\b{\beta}
\def\g{\gamma}
\def\t{\tau}
\def\th{\theta}
\def\D{\Delta}
\def\L{\Lambda}
\def\O{\Omega}
\def\S{\Sigma}
\def\del{\partial}
\def\R{\mathbb{R}}
\def\N{\mathbb{N}}
\def\P{\mathbb{P}}
\def\Z{\mathbb{Z}}
\def\E{\mathbb{E}}
\def\AA{\mathcal{A}}
\def\BB{\mathcal{B}}
\def\DD{\mathcal{D}}
\def\EE{\mathcal{E}}
\def\FF{\mathcal{F}}
\def\GG{\mathcal{G}}
\def\LL{\mathcal{L}}
\def\MM{\mathcal{M}}
\def\OO{\mathcal{O}}
\def\PP{\mathcal{P}}
\def\QQ{\mathcal{Q}}
\def\RR{\mathcal{R}}
\def\SS{\mathcal{S}}
\def\VV{\mathcal{V}}
\def\VV{\mathcal{ V}}
\def\WW{\mathcal{W}}
\def\L{{\Lambda}}
\def\b{{\beta}}
\def\D{{\Delta}}
\def\t{{\tau}}
\def\th{{\theta}}
\def\g{{\gamma}}
\def\a{{\alpha}}
\def\O{{\Omega}}
\def\th{{\theta}}
\def\del{{\partial}}
\def\D{\mathbb{D}}
\def\cov{\operatorname{Cov}}
\def\1{\mathbh{1}}
\def\wt{\widetilde}
\def\wh{\widehat}
\def\cadlag{\mbox{c\`{a}dl\`{a}g }}
\begin{document}
\begin{frontmatter}

\title{Convergence of clock processes in random environments and
ageing in the \lowercase{$p$}-spin SK model}
\runtitle{Convergence of clock processes}

\begin{aug}
\author[A]{\fnms{Anton} \snm{Bovier}\thanksref{t1}\ead[label=e1]{bovier@uni-bonn.de}}
\and
\author[B]{\fnms{V\'eronique} \snm{Gayrard}\corref{}\ead[label=e2]{veronique@gayrard.net}}
\runauthor{A. Bovier and V. Gayrard}

\thankstext{t1}{Supported by the DFG through SFB 611 and the Hausdorff Center
of Mathematics.}

\affiliation{Rheinische Friedrich-Wilhelms-Universit\"{a}t and Universit\'e de Provence}
\address[A]{Institut f\"ur Angewandte Mathematik\\
Rheinische Friedrich-Wilhelms-Universit\"{a}t\\
Endenicher Allee 60\\
53115 Bonn\\
Germany\\
\printead{e1}} %adresu isvedimo komanda gale!
\address[B]{CMI, LAPT\\
Universit\'e de Provence\\
39, rue F. Joliot Curie\\
13453 Marseille cedex 13\\
France\\
\printead{e2}}
\end{aug}

% HISTORY:
\received{\smonth{8} \syear{2010}}
\revised{\smonth{7} \syear{2011}}

% ABSTRACT
%
\begin{abstract}
We derive a general criterion for the convergence of clock
processes in random dynamics in random environments that is
applicable in cases when correlations are not negligible,
extending recent results by Gayrard [(2010), (2011), forthcoming],
based on
general criterion for convergence of sums of
dependent random variables due to Durrett and Resnick
[\textit{Ann. Probab.} \textbf{6} (1978) 829--846].
We demonstrate the power of this criterion by applying it to the case of
random hopping time dynamics of the $p$-spin SK model. We prove that
on a wide range of time scales, the clock process converges to
a stable subordinator \textit{almost surely} with respect to the
environment. We also show that a time-time correlation function
converges to the arcsine law for this subordinator, almost surely.
This improves recent results of Ben~Arous, Bovier and {\v{C}}ern{\'y}
[\textit{Comm. Math. Phys.} \textbf{282} (2008) 663--695]
that obtained similar convergence results in law, with respect to the
random environment.
\end{abstract}

% KEYWORDS
%
\begin{keyword}[class=AMS]
\kwd{82C44}
\kwd{60K35}
\kwd{60G70}.
\end{keyword}
\begin{keyword}
\kwd{Random dynamics}
\kwd{random environments}
\kwd{clock process}
\kwd{L\'evy processes}
\kwd{spin glasses}
\kwd{aging}.
\end{keyword}

\pdfkeywords{82C44, 60K35, 60G70, Random dynamics,
random environments, clock process, Levy processes,
spin glasses, aging}

\end{frontmatter}

%s1 ###
\section{Introduction and main results}
\label{S1}

Over the last decades, random motion in random environments have been
one of the main foci of research in applied probability theory and mathematical
physics. This is due to the wide range of real life systems that
can be modeled in this way, but also to the exciting, unforeseen
and often counter-intuitive effects they exhibit. In fact, the early
works of Solomon~\cite{Solo} and Sinai~\cite{Sinai-rwre} on
random walks in one-dimensional random environment were already striking
examples of this feature.

While the most straightforward model class, the random walk in random
environments on the lattice $\Z^d$, received the bulk of
attention in the probability community, over the last decade, the
study of the dynamics of spin glass models has attracted
considerable attention in connection with the concept of aging.
See, for example, \cite
{BC06} for a review. The dynamics of these models is expected to
show very slow convergence to equilibrium, measurable in the anomalous
behavior of certain time-time correlation functions.

Interesting models of the dynamics of spin glasses are Glauber dynamics
on state spaces $\S_n=\{-1,1\}^n$, reversible with respect to
Gibbs measures associated to random Hamiltonians, given by
correlated Gaussian processes indexed by the hypercube $\S_n$. Even
on the
nonrigorous level, predictions on their behavior were mostly based
on the basis of drastically simplified \textit{trap models}
\cite{Bou92,BD95,MB96,RMB00,BCKM98}, based in turn on the
ideas of Goldstein~\cite{Gold69} to
describe dynamics on long times scales in terms
of thermally activated barrier crossings.

A rigorous analysis of many variants of such models was carried out over
the last years~\cite{BC05,BCM06,BC06b,BC07}. A striking feature
that emerged in these works was the universal recurrence of the
$\a$-stable L\'evy subordinators as basic random mechanisms in the description
of the asymptotic properties of their dynamics.
Another line of research tried to give a rigorous justification of the
connection between spin glass dynamics and trap models. This was
successful for the \textit{Random Energy Model (REM)} of Derrida under
a particular variant of the Glauber dynamics (the random hopping time
dynamics, see below), first on times scales close to
equilibrium~\cite{BBG02,BBG03,BBG03b} and later also on
shorter time scales~\cite{BC06b}. These results were partially
extended to spin glasses with nontrivial correlations,
the so-called $p$-spin SK models, by
Ben Arous, Bovier and \v Cern\'y~\cite{BBC08}. Their results cover a limited
range of times scales (in fact one expects a change of behavior at
longer scales), and only
in law with respect to the random environment, which in this case appears
unnatural.

The recurrent appearance of stable subordinators in such a large
variety of
model systems asks for a simple and robust explanation. Such an
explanation was given in a limited context of trap models by
Ben Arous and \v Cern\'y~\cite{BC06b}.

A more direct and general view on this problem was
presented in a recent paper by one of us~\cite{G10a} and applied to
more complicated situations in~\cite{G10b} and~\cite{G10c}.
It emerges that the entire problem links up directly to a classical
and well-studied field of probability theory, the convergence of
sums of random variables to L\'evy processes. The case of independent random
variables has been well known since the work of Gnedenko and Kolmogorov
\cite{GneKol}, but a lot of work was done for the case of dependent
random variables as well. In particular, there is a very amenable and useful
criterion due to Durrett and Resnick~\cite{DR78} that we will rely on here.

Before entering in more detail, let us briefly describe the general
setting of
\textit{Markov jump processes} in random environments that we consider
here. Our arena is a sequence of loop-free graphs,
$G_n(\VV_n, \LL_n)$ with set of vertices, $\VV_n$, and set of edges,
$\LL_n$.

A \textit{random environment} is a family of
positive random variables, $\t_n(x), x\in\VV_n$, defined on
some abstract probability space, $(\O,\FF, \P)$.
Note that
we do not assume independence.

Next we define discrete time Markov processes, $J_n$, with state space
$\VV_n$
and nonzero transition probabilities along the edges, $\LL_n$.
We denote by $\mu_n$ its initial distribution and by $p_n(x,y)$ the
elements of its transition matrix. Note that the
$p_n$ may be random variables on
the space $(\O,\FF,\P)$. We assume that the process $J_n$ is reversible
and admits a unique
invariant measure $\pi_n$.

We construct our process of interest, $X_n$, as a time change of
$J_n$. To this end we set
\begin{equation}\label{inv-ab.1}
{\lambda}_n(x)\equiv C\pi_n(x)/\t_n(x),
\end{equation}
for some (model dependent) constant $C>0$,
and define the \textit{clock process}
\begin{equation}
\wt S_n(k)=\sum_{i=0}^{k-1}{\lambda}_n^{-1}(J_n(i))e_{n,i} ,\qquad
k\in\N,
\label{1.1.6}
\end{equation}
where $(e_{n,i} ,n\in\N, i\in\N)$ is a family of independent mean
one exponential\setcounter{footnote}{1}\footnote{One can consider more general situations when
$e_{n,i}$ have different distributions as well, leaving the setting of
Markov processes.}
random variables, independent of $J_n$.

We now define our continuous time process of interest, $X_n$, as
\begin{equation}
X_n(t)= J_n(i),\qquad\mbox{if } \wt S_n(i)\leq t<\wt
S_n(i+1)
\mbox{ for some } i.
\label{1.1.7}
\end{equation}
One can readily verify that $X_n$ is a continuous time Markov process
with infinitesimal generator ${\lambda}_n$,
whose elements
are
\begin{equation}\label{clock-ab.2}
{\lambda}_n(x,y)= {\lambda}_n(x)p_n(x,y),
\end{equation}
and whose unique invariant measure is given by
\begin{equation}\label{clock-ab.3}
C\pi_n(x){\lambda}^{-1}_n(x)= \t_n(x).
\end{equation}
Note that the numbers ${\lambda}_n^{-1}(x)$ play the role of the mean holding
time of the process~$X_n$ in a site $x$.

For future reference, we refer to the ${\sigma}$-algebra generated by the
variables $J_n$ and $X_n$ as $\FF^J$ and $\FF^X$, respectively.
We write $P_{\mu_n}$ for the law of the process~$J_n$, conditional on
the ${\sigma}$-algebra $\FF$,
that is, for fixed realizations of the random environment.
Likewise we call $\PP_{\mu_n}$ the law of $X_n$ conditional on~$\FF$.

This construction brings out the crucial role played by the clock
process. If the chain $J_n$ is rather fast mixing, convergence to
equilibrium can only be slowed through an erratic behavior of the
clock process. This process, on the other hand, is a sum of positive
random
variables, albeit in general dependent ones. The approach of~\cite{G10a}
(and already~\cite{BBC08}) is to abstract from all other issues
and to focus on the analysis of the asymptotic behavior of the clock
process. From that point onward, it
is not surprising that stable subordinators will emerge\vadjust{\goodbreak} as a standard
class of limit processes; the universality appearing here is simply
linked to the universal appearance of stable processes in the theory of
sums of random variables.

In this paper we are mainly concerned with establishing
criteria for the convergence of processes like (\ref{1.1.6}) under suitable
scaling; that is, we will ask when there are constants, $a_n,c_n$, such that
the process
\begin{equation}
S_n(t)\equiv c_n^{-1}\wt S_n(\lfloor a_nt\rfloor)
=c_n^{-1}\sum_{i=0}^{\lfloor a_n t\rfloor-1}
\lambda_n^{-1}(J_n(i))e_{n,i} ,\qquad t>0,
\label{1.clock}
\end{equation}
converges in some sense to a limit process. Note that in physical terms,
the constants $c_n$ correspond to the time scale on which we observe our
continuous time Markov process $X_n$, while $a_n$ corresponds to the number
of steps the underlying process $J_n$ makes during that time.

Due to the doubly stochastic nature of our processes, convergence can be
considered in various modes, that is, under various laws. The physically
most desirable one is referred to as \textit{quenched}, that is, to say
$\P$-almost sure convergence (to a deterministic or random process)
under the
law $\PP_{\mu_n}$. In~\cite{BBC08} another point of view was taken, namely
$P_{\mu_n}$-almost sure convergence under the law of the random medium and
the exponential random variables $e_{n,i}$.
Both imply the weakest form of convergence in law under the joint law of
all random variables involved, often misleadingly referred to as
\textit{annealed}. The method used in~\cite{BBC08} was based on the
analysis of
the Laplace transform of the clock process and the use of Gaussian comparison
theorems. This left no way to deal with a fixed random environment.
We will see, however, that we are to use heavily the computations from that
paper.

%s1.1 ###
\subsection{Key tools and strategy}

This approach is based on a powerful and illuminating method developed
by Durrett and Resnick~\cite{DR78}
to prove functional limit theorems for dependent variables.
We state their theorem in a specialized form suitable for our applications,
which is taken from~\cite{G10a} (see Theorem 2.1).

\begin{theorem}\label{dr.1}
Let $Z_i^n$ be a triangular array of random variables with support in
$\R_+$
defined on some probability space $(\O,\FF,\PP)$.
Let $\nu$ be a sigma-finite measure
on $(\R_+,\BB(\R_+))$, such that
$\int_0^\infty(x\wedge1)\nu(dx)<\infty$.
Assume that there exists a sequence $a_n$, such that
for all continuity points $x$ of the distribution function of
$\nu$, for all $t>0$, in $\PP$-probability,
\begin{equation}\label{dr.2}
\lim_{n\uparrow\infty}\sum_{i=1}^{ \lf a_nt\rf}\PP
(Z_i^n>x|\FF
_{n,i-1}) =t\nu(x,\infty),
\end{equation}
and
\begin{equation}\label{dr.3}
\lim_{n\uparrow\infty}\sum_{i=1}^{\lf a_nt\rf}[\PP
(Z_i^n>x|\FF
_{n,i-1})]^2 =0,\vadjust{\goodbreak}
\end{equation}
where $\FF_{n,i}$ denotes the ${\sigma}$-algebra generated by the random
variables $Z_{n,j},\break j\leq i$.
If, moreover,
\begin{equation}\label{careful.20}
\lim_{{\varepsilon}\downarrow0} \limsup_{n\uparrow\infty}
\sum_{i=1}^{\lf a_n t\rf} \EE\1_{Z_i^n\leq{\varepsilon}}Z^n_i =0,
\end{equation}
then
\begin{equation}\label{dr.5}
\sum_{i=1}^{ \lf a_nt\rf}Z_{n,i}\Rightarrow S_\nu(t),
\end{equation}
where $S_\nu$ is the L\'evy subordinator with L\'evy measure
$\nu$ and zero drift. Convergence holds weakly on the space
$D([0,\infty))$ equipped with the Skorokhod $J_1$-topology.
\end{theorem}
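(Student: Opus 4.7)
The plan is to reduce the functional limit statement to a point-process convergence result and then combine it with a small-jump truncation argument. Fix $T>0$ and work in $D([0,T])$; convergence in $D([0,\infty))$ then follows by taking $T$ arbitrary.

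Step one is to show that the random point measures
$$N_n=\sum_{i=1}^{\lf a_n T\rf}\delta_{(i/a_n,\,Z_i^n)}$$
on $[0,T]\times(0,\infty]$ converge in distribution, in the vague topology, to a Poisson random measure $N$ with intensity $dt\otimes\nu(dx)$. For this I would apply a Kallenberg-type criterion for Poisson convergence of triangular arrays of possibly dependent points: on a determining class of relatively compact Borel sets bounded away from zero, the predictable compensator of $N_n$ must converge in probability to the intensity of the candidate limit, and an ``asymptotic-simplicity'' sum of squared conditional probabilities must vanish. Conditions \eqv(dr.2), applied to rectangles of the form $[0,t]\times(x,\infty)$, and \eqv(dr.3) are precisely these two requirements; together they identify the mean measure of the limit and force it to be simple, so that the limit is necessarily Poisson with the claimed intensity.

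Step two passes from the point-process limit to a functional limit by truncating the small jumps. For a continuity point $\e>0$ of $\nu$, define the big-jump process $S_n^\e(t)=\sum_{i\leq\lf a_n t\rf}Z_i^n\1_{Z_i^n>\e}$; it is the image of $N_n$ restricted to $[0,T]\times(\e,\infty]$ under the partial-sum functional, which is almost surely continuous in the vague-to-$J_1$ sense at configurations with no coincident times and no atom at $T$, a set of full measure under the limit PRM. The continuous-mapping theorem yields $S_n^\e\Rightarrow S_\nu^\e$ in $(D([0,T]),J_1)$, where $S_\nu^\e$ is $S_\nu$ with jumps of size at most $\e$ deleted. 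The remainder $S_n-S_n^\e$ is nondecreasing, so its supremum over $[0,T]$ equals its value at $T$; Markov's inequality together with condition \eqv(careful.20) gives
$$\lim_{\e\downarrow 0}\limsup_{n\to\infty}\PP\lb\sup_{t\leq T}|S_n(t)-S_n^\e(t)|>\delta\rb=0,$$
and the analogous (classical) bound holds for $S_\nu-S_\nu^\e$ since the limit subordinator has zero drift. A standard $3\e$-argument then promotes the truncated weak convergences to $S_n\Rightarrow S_\nu$ in $(D([0,T]),J_1)$.

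The main obstacle is the Poisson convergence in step one, since the $Z_i^n$ are dependent and classical tools such as the method of moments are not available. The key point --- and the essential content of \cite{DR78} --- is that the conditional-tail asymptotics \eqv(dr.2) furnish exactly the correct compensator, while the $L^2$-type condition \eqv(dr.3) rules out clustering, so the two hypotheses together suffice to identify the limiting random measure as Poisson rather than a more general infinitely-divisible one. Once this is in hand, the $J_1$-topology bookkeeping in step two is routine, because restriction to jumps above $\e$ reduces everything to step functions with finitely many jumps on $[0,T]$, and condition \eqv(careful.20) is tailored to absorb the small-jump error uniformly in $n$.
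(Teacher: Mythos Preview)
The paper does not actually prove Theorem~\thv(dr.1); it is quoted (in a specialised form) from Durrett and Resnick \cite{DR78} via \cite{G10a}, and is used as a black box in the subsequent proof of Theorem~\thv(2.theo1). So there is no ``paper's own proof'' to compare against.

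That said, your sketch is a faithful outline of the Durrett--Resnick argument itself: they, too, first establish convergence of the point process $\sum_i \delta_{(i/a_n,Z_i^n)}$ to a Poisson random measure with intensity $dt\otimes\nu$ (their Theorem~2.1), using precisely the compensator condition \eqv(dr.2) and the square-sum condition \eqv(dr.3) as the two hypotheses of a Kallenberg-type criterion, and then pass to the partial-sum functional by truncating at level $\e$ and controlling the small-jump remainder. Your identification of \eqv(careful.20) as the device that absorbs the small-jump error uniformly in $n$ is correct; without it one could have mass leaking into a drift in the limit. The only place where some care is needed is the continuous-mapping step: one must check that the summation functional is $J_1$-continuous at simple point configurations with finitely many atoms in $[0,T]\times(\e,\infty)$ and no atom at $T$, which you note. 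Overall your proposal is a correct high-level reconstruction of the original proof in \cite{DR78}.
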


\begin{remark*}
Condition (\ref{careful.20}) ensures that ``small'' terms in the sum
do not
contribute to the limit. It is almost a consequence of assumption (\ref{dr.2})
and the hypothesis on the limiting measure $\nu$. However, in the general
context of triangular arrays, one can easily construct counterexamples if
(\ref{careful.20}) is not imposed.
\end{remark*}

\begin{remark*}
We emphasize that the result holds in the (usual) $J_1$-topology, since this
is crucial for applications to correlation functions. See~\cite{Whi02} for
an extensive discussion of topologies on \cadlag spaces.
\end{remark*}

The straightforward idea is to apply this theorem with
$Z_{n,i}\equiv\break c_n^{-1}{\lambda}_n^{-1}(J_n(i))e_{n,i}$. This was done in
\cite{G10a}
(see Theorem 1.3.) and applied to the case of Bouchaud's trap models
\cite{G10a} and in the
random energy model~\cite{G10b,G10c} where it allowed the author to extend
all previously know results in a very elegant way.

In models with strong local correlations, such as the $p$-spin SK model,
one cannot, however, expect that with this choice the
conditions of the theorem will be satisfied. In fact, one easily convinces
oneself that contributions to the sum in (\ref{dr.5}) cannot only come from
singly widely separated points $i$, but that such contributing terms
form clusters due to the correlations.

In this paper we show that a good way to proceed in such a situation
is to
use a suitable blocking. Introduce a new scale, $\th_n$,
and use Theorem~\ref{dr.1}
with the random variables
\begin{equation}\label{dr.6}
Z_{n,i}\equiv\sum_{j=\th_n (i-1)+1}^{\th_ni}
c_n^{-1}{\lambda}_n^{-1}(J_n(i))e_{n,i},\qquad i\geq1.
\end{equation}
The purpose of this procedure is that if $J_n$ is rapidly
mixing, we can hope to choose $\th_n\ll a_n$ such that the random variables
$J_n(\th_n i), i\in\N$ are close to independent and distributed according
to the invariant distribution $\pi_n$. But then, under the law $\PP
_{\mu_n}$,
also the random variables $Z_{n,i}$ are close to independent and identically
distributed (although with a complicated distribution, that is, a
random variable
depending on the random environment). That should put us in a position to
verify
the conditions of Theorem~\ref{dr.1}.

Let us now look at this in more detail.

For $y\in\VV_n$ and $u>0$, let
\begin{equation}
Q^{u}_n(y)\equiv\PP_{y}\Biggl(
\sum_{j=0}^{\theta_n-1}{\lambda}_n^{-1}(J_n(j))e_{n,j}>c_n u
\Biggr)
\label{2.1}
\end{equation}
be the tail distribution of the aggregated jumps when $X_n$ starts in $y$.
Note that $Q^{u}_n(y)$, $y\in\VV_n$, is a random function on the
probability space $(\O, \FF, \P)$,
and so is the function $F^{u}_n(y)$, $y\in\VV_n$ defined through
\begin{equation}
F^{u}_n(y)\equiv\sum_{x\in\VV_n}p_n(y,x)Q^{u}_n(x) .
\label{2.2}
\end{equation}
Writing
$k_n(t)\equiv\lf{\lf a_n t\rf}/{\theta_n}\rf$,
we further define
\begin{eqnarray}\label{2.3.0}
\nu_n^{J,t}(u,\infty)
&\equiv& \sum_{i=0}^{k_n(t)-1}F^{u}_n\bigl(J_n(\theta_n(i))\bigr) ,
\\
({\sigma}_n^{J,t})^2(u,\infty)
&\equiv&
\sum_{i=0}^{k_n(t)-1}\bigl[
F^{u}_n\bigl(J_n(\theta_n(i))\bigr)
\bigr]^2 .
\label{2.3}
\end{eqnarray}
Finally, we set
\begin{equation}\label{blocked.0}
\quad\bar S_n(k)\equiv\sum_{i=1}^{k}
\Biggl(\sum_{j=\th_n(i-1)+1}^{\theta_n i}c_n^{-1}{\lambda}
_n^{-1}(J_n(j))e_{n,j}\Biggr)+
c_n^{-1}{\lambda}_n^{-1}(J_n(0))e_{n,0}
\end{equation}
and
\begin{equation}\label{blocked.1}
S_n^b(t)\equiv\bar S_n(k_n(t)).
\end{equation}

We now formulate four conditions for the sequence
$S_n$ to converge to a subordinator.
Note that these conditions refer to given sequences of numbers $a_n,
c_n$ and~$\th_n$ as well as a given realization of the random environment.

\renewcommand{\thecondition}{(A\arabic{condition})}
\begin{condition}\label{coa1}
There exists a ${\sigma}$-finite measure $\nu$ on $(0,\infty)$ satisfying
the hypothesis stated in Theorem~\ref{dr.1},
and such that for all $t>0$ and all $u>0$,
\begin{equation}
P_{\mu_n}^{}\bigl(
|
\nu_n^{J,t}(u,\infty)-t\nu(u,\infty)
|
<{\varepsilon}
\bigr)=1-o(1) \qquad\forall{\varepsilon}>0 .
\label{2.A1}
\end{equation}
\end{condition}

\begin{condition}\label{coa2} For all $u>0$ and all $t>0$,
\begin{equation}
P_{\mu_n}^{}\bigl(
({\sigma}_n^{J,t})^2(u,\infty)<{\varepsilon}
\bigr)=1-o(1)  \qquad\forall{\varepsilon}>0 .
\label{2.A2}
\end{equation}
\end{condition}

\begin{condition}\label{coa3} For all $t>0$,
\begin{equation}\label{careful.30}
\lim_{{\varepsilon}\downarrow0}\limsup_{n\uparrow\infty}
\EE_{\mu_n}\sum_{i=1}^{\lf a_n t\rf} \1_{\{{\lambda}
_n^{-1}(J_n(i))e_i\leq
c_n{\varepsilon}\}} c_n^{-1}{\lambda}_n^{-1}(J_n(i))e_i=0.
\end{equation}
\end{condition}

\renewcommand{\thecondition}{(A\arabic{condition}$'$)}
\setcounter{condition}{-1}
\begin{condition}\label{coa0'} For all $v>0$,
\begin{equation}
\sum_{x\in\VV_n}\mu_n^{}(x)e^{-vc_n{\lambda}^{}_n(x)}=o(1) .
\label{2.A0'}
\end{equation}
\end{condition}

\begin{theorem}\label{2.theo1}
For all sequences of initial distributions $\mu_n$
and all sequences~$a_n$, $c_n$ and $1\leq\theta_n\ll a_n$,
for which Conditions~\ref{coa0'},~\ref{coa1},~\ref{coa2} and \ref
{coa3} are verified,
either $\P$-almost surely or in $\P$-probability [meaning that the
terms $o(1)$ converge to zero either almost surely or in probability, resp.],
the following holds w.r.t. the same convergence mode:
\begin{equation}
S^b_n(\cdot)\Rightarrow S_\nu(\cdot),
\label{2.theo1.1}
\end{equation}
where $S_\nu$ is the L\'evy subordinator with L\'evy measure
$\nu$ and zero drift. Convergence holds weakly on the space
$D([0,\infty))$ equipped with the Skorokhod $J_1$-topology.
\end{theorem}

\begin{remark*} Note that Condition~\ref{coa0'} is there to ensure
that last
term in
(\ref{blocked.0}) converges to zero in the limit $n\uparrow\infty$.
\end{remark*}

\begin{remark*} The result of this theorem is stated for the \textit{blocked}
process $S^b_n(t)$. It implies immediately that under the same hypothesis,
the original process $S_n(t)$ [defined in (\ref{1.clock})] converges to
$S_\nu$
in the weaker $M_1$-topology; see~\cite{Whi02} for a detailed
discussion of
Skorokhod topologies. However, the statement of the theorem is
strictly stronger than just convergence in $M_1$, and it is this form
that is useful in applications.
\end{remark*}

\begin{remark*}
To extract detailed information on the process $X_n$, for example the
behavior of
correlation functions, from the convergence of the blocked clock process,
one needs further information on the typical behavior of the process
during the $\th_n$ steps of a single block. This is a model-dependent issue,
and we will exemplify how this can be done in the context of the
$p$-psin SK model.
\end{remark*}

We now come to the key step in our argument. This consists in reducing
Conditions~\ref{coa1} and~\ref{coa2} of Theorem~\ref{2.theo1} to:
(i) a
\textit{mixing condition} for the
chain $J_n$ and (ii) a \textit{law of large numbers} for the random variables
$Q_n$.\vadjust{\goodbreak}

Again we formulate three conditions for given sequences $a_n, c_n$ and a
given realization of the random environment.

\renewcommand{\thecondition}{(A\arabic{condition}-1)}
\setcounter{condition}{0}
\begin{condition}\label{coa1.1} Let $J_n$ be a periodic Markov chain
with period $q$.
There exists an integer sequence $\ell_n\in\N$, and a positive
decreasing sequence $\rho_n$, satisfying
$\rho_n\downarrow0$ as $n\uparrow\infty$, such that for all pairs
$x,y\in\VV_n$, and all $i\geq0$,
\begin{equation}
\quad\sum_{k=0}^{q-1}P_{\pi_n}\bigl(J_n(i+\ell_n+k)=y, J_n(i)=x
\bigr)\leq
(1+\rho_n)\pi_n(x)\pi_n(y) .
\label{3.A1-1}
\end{equation}
\end{condition}

\begin{condition}\label{coa2.1}
There exists a measure $\nu$, as in condition~\ref{coa1}, such that
\begin{equation}\label{3.prop1.0}
\nu_n^{t}(u,\infty)\equiv k_n(t)\sum_{x\in\VV_n}\pi_n(x)Q^{u}_n(x)
\rightarrow t\nu(u,\infty) ,
\end{equation}
and
\begin{equation}\label{3.prop1.01}\qquad
({\sigma}_n^{t})^2(u,\infty)\equiv k_n(t)\sum_{x\in\VV_n}\sum
_{x'\in
\VV_n}\pi
_n(x)p_n^{(2)}(x,x')Q^{u}_n(x)Q^{u}_n(x')\rightarrow0 ,
\end{equation}
where
$p_n^{(2)}(x,x')
=\sum_{y\in\VV_n}p_n(x,y)p_n(y,x')
$
are 2-step transition probabilities.
\end{condition}

\begin{condition}\label{coa3.1}
For all $t>0$,
\begin{equation}\label{careful.30}
\lim_{{\varepsilon}\downarrow0}\limsup_{n\uparrow\infty} \lf a_n
t\rf
\EE_{\pi_n} \1_{\{{\lambda}_n^{-1}(J_n(0))e_0\leq c_n{\varepsilon}\}
} c_n^{-1}{\lambda}
_n^{-1}(J_n(0))e_0=0.
\end{equation}
\end{condition}

\begin{remark*} The limiting measure $\nu$ may be deterministic or random.
\end{remark*}

\begin{theorem}\label{main.1}
Assume that for $\mu_n=\pi_n$ and for sequences $a_n$, $c_n$, $\ell_n$
and $\ell_n\leq\theta_n\ll a_n$,
Conditions~\ref{coa1.1},~\ref{coa2.1},~\ref{coa3.1} and~\ref{coa0'}
hold $\P$-a.s., respectively in $\P$-probability. Then
the sequence of random stochastic process $S^b_n$ converges to
the process $S_\nu$, weakly in the Skorokhod space $D[0,\infty)$ equipped
with the $J_1$-topology, $\P$-almost surely, respectively in $\P$-probability.
\end{theorem}

%s1.2 ###
\subsection{Application to the $p$-spin SK model}
Theorem~\ref{main.1} is the central result of this paper. It provides
a very nice tool to prove convergence results of clock processes
almost surely with respect to the random environment, that is the physically
desirable mode. It is capable of dealing with correlations that have an
effect, such as are present in the $p$-spin SK model. In this model,
the underlying graphs $\VV_n$ are the hypercubes $\S_n=\{-1,1\}^n$.
On $\S_n$ we consider a Gaussian process, $H_n$, with zero mean and covariance
\begin{equation}\label{pspin.1}
\E H_n(x)H_n(x') =nR_n(x,x')^p,
\end{equation}
where $R_n(x,x')\equiv\frac1n \sum_{i=1}^n x_ix'_i$. The random
environment, $\t_n(x)$, is then defined in terms of $H_n$ by
\begin{equation}\label{pspin.2}
\t_n(x)\equiv\exp(\b H_n(x)),\vadjust{\goodbreak}
\end{equation}
with $\b\in\R_+$ the inverse temperature. The Markov chain, $J_n$, is
chosen as the simple random walk on $\S_n$, that is,
\begin{equation}\label{pspin.3}
p_n(x,x')=
\cases{\displaystyle\frac1n, &\quad$\mbox{if } \dist(x,x')=1,$\vspace*{2pt}\cr
0,&\quad$\mbox{else};$}
\end{equation}
here $\dist(\cdot,\cdot)$ is the graph distance on $\S_n$,
\begin{equation}\label{graph.dist}
\dist(x,x')\equiv\frac12 \sum_{i=1}^n |x_i-x'_i|.
\end{equation}
This chain has for unique invariant measure the measure $\pi_n(x)=2^{-n}$.
Finally, choosing $C=2^n$ in (\ref{inv-ab.1}), the mean holding times,
${\lambda}
^{-1}_n(x)$,
reduce to ${\lambda}^{-1}_n(x)= \t_n(x)$.

\begin{theorem}
\label{p:main} For any $p\geq3$,
there exists a constant $K_p>0$ that depends on $\beta$ and $\gamma$,
and a function $\zeta(p)$, such that
for all $\gamma$ satisfying
%
%e1 ###
%
\begin{equation}
0 < \gamma< \min(\beta^2,\zeta(p) \beta),
\end{equation}
the law of
the stochastic process

%
%e2 ###
%
\begin{equation}
\label{e:barSdef}
S^b_n(t)\equiv
e^{-\gamma n}
S_n(\th_n\lfloor t n^{1/2}e^{n\gamma^2/2\beta^2}\th
_n^{-1}\rfloor
),\qquad
t\ge0,
\end{equation}
with $\th_n=\frac{3\ln2}2 n^2$,
defined on the space of \cadlag functions equipped with the
Skorokhod $J_1$-topology, converges to the law of the
stable subordinator
$V_{\gamma/\beta^2}(t), t\ge0$, of L\'evy measure
$ K_p(\gamma/\beta^2) x^{-\gamma/\beta^2-1}\,dx$.
Convergence holds $\P$-a.s. if $p>4$, and in $\P$-probability, if
$p=3,4$.

The function $\zeta(p)$ is increasing, and it satisfies
%
%e3 ###
%
\begin{equation}
\label{e:zeta}
\zeta(3)\simeq1.0291
\quad\mbox{and}\quad
\lim_{p\to\infty}\zeta(p)=\sqrt{2\log2}.
\end{equation}
\end{theorem}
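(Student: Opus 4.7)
The plan is to apply Theorem \thv(main.1) with $c_n = e^{\gamma n}$, $a_n = n^{1/2} e^{n\gamma^2/(2\beta^2)}$, $\theta_n = \tfrac{3\ln 2}{2} n^2$, the chain $J_n$ being the simple random walk on $\Sigma_n = \{-1,1\}^n$ (whose invariant measure $\pi_n$ is uniform, so $\lambda_n^{-1}(x) = 2^n e^{\beta H_n(x)}$), and a suitable initial distribution $\mu_n$. The claim then reduces to verifying Conditions (A0'), (A1-1), (A2-1), (A3-1) in the appropriate sense (almost sure for $p>4$, in probability for $p=3,4$), with a limiting L\'evy measure of the form $\nu(u,\infty)=K_p\,u^{-\gamma/\beta^2}$, producing a $(\gamma/\beta^2)$-stable subordinator. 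The constants $K_p$ and $\zeta(p)$, and the threshold $\gamma<\min(\beta^2,\zeta(p)\beta)$, will emerge from the Gaussian computations below.

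Condition (A1-1) is disorder-independent: the simple random walk on the hypercube has period $q=2$, and its spectrum $\{1-2k/n\}_{k=0}^{n}$ yields the uniform bound \eqv(3.A1-1) with $\rho_n\to 0$ once $\ell_n$ is of order $n\log n$, well below $\theta_n$ of order $n^2$. Condition (A3-1) reduces to a standard truncation estimate using $\E[e\,\1_{e\le a}]\le a^2/2$ for a unit-mean exponential, combined with the typical size of $\lambda_n^{-1}$ under $\pi_n$ and the Gaussian tail of $H_n$. Condition (A0') follows from Gaussian concentration of $H_n$ by restricting $\mu_n$ to avoid the rare sites where $\lambda_n^{-1}$ is abnormally large, e.g.\ by conditioning on $\{x:H_n(x)\le M\sqrt n\}$ for some $M$.

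The core of the proof is Condition (A2-1), where I would reuse and refine the Gaussian interpolation of Ben Arous--Bovier--\v{C}ern\'y \cite{BBC08}, reinterpreting their annealed Laplace-transform computation as a computation of $\E\,\nu_n^t(u,\infty)=k_n(t)\sum_x\pi_n(x)\,\E Q_n^u(x)$. The mechanism is that in $\theta_n$ steps the walk explores order $n^2$ sites within a ball of diameter $O(n)$, and the tail event $\{\sum_{j=1}^{\theta_n}\lambda_n^{-1}(J_n(j-1))e_{n,j-1}>c_n u\}$ is dominated by the occurrence of a single site $y$ in that block with $\beta H_n(y)$ close to $\gamma n$. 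The Gaussian density at that deviation level, weighted by the count of such sites and the exponential tail of $e_{n,j}$, produces the power-law tail $K_p u^{-\gamma/\beta^2}$ and converges to $t\,K_p\,u^{-\gamma/\beta^2}$. The constraint $\gamma<\beta^2$ ensures the stable index $\gamma/\beta^2$ lies in $(0,1)$, while $\gamma<\zeta(p)\beta$ is exactly the range in which the Gaussian saddle-point survives the interpolation against the $p$-spin covariance $nR_n^p$. The second part of Condition (A2-1) is immediate from the remark following it, since $\sup_x\sum_{x'}p_n(x,x')^2=1/n\to 0$.

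The main obstacle---and the genuinely new step beyond \cite{BBC08}---is upgrading convergence in expectation to convergence in $\P$-probability, and, when $p>4$, to $\P$-a.s.\ convergence. This requires estimating $\Var(\nu_n^t(u,\infty))$ block by block: typical pairs of block starting points $x,x'$ satisfy $|R_n(x,x')|=O(n^{-1/2})$, so $\E H_n(x)H_n(x')=nR_n^p=O(n^{1-p/2})$. A Gaussian interpolation between the true field and an independent (REM-like) one then shows that $\Cov(Q_n^u(x),Q_n^u(x'))$ decays polynomially at a rate that improves with $p$, forcing $\Var(\nu_n^t)\to 0$. For $p>4$ the decay is fast enough that a Borel--Cantelli argument along a polynomial subsequence---combined with monotonicity in $u$ to handle the continuum of thresholds---yields $\P$-a.s.\ convergence; for $p=3,4$ the decorrelation is only marginal and only $\P$-probability convergence can be secured. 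Controlling the joint effect of the $p$-spin correlations across two distinct blocks of order $n^2$ visited sites is precisely the technical heart of the proof, and is the origin both of the threshold $\zeta(p)$ and of the $p>4$ versus $p\in\{3,4\}$ dichotomy.
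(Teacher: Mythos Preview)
Your overall architecture is correct and matches the paper: apply Theorem \thv(main.1) with exactly the scales $a_n,c_n,\theta_n$ you name, verify (A1-1) via the SRW spectral gap, reduce the second half of (A2-1) to $\sup_x\sum_{x'}p_n(x,x')^2=1/n$, and derive the $p>4$ versus $p\in\{3,4\}$ dichotomy from a variance bound of order $n^{1-p/2}$ coming from the typical overlap $|R_n|=O(n^{-1/2})$ between two independent uniformly-started chains.

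There is one genuine methodological difference worth flagging. You propose to compute $\E\,\nu_n^t(u,\infty)$ and $\Var\,\nu_n^t(u,\infty)$ directly for the tail probabilities $Q_n^u$. The paper instead passes to the Laplace transform
\[
\hat\nu_n^t(v)=k_n(t)\,v^{-1}\bigl(1-\EE_{\pi_n}e^{-vZ_n}\bigr),
\]
proves $\E\hat\nu_n^t(v)\to K_p t\,v^{\gamma/\beta^2-1}$ and $\Var\hat\nu_n^t(v)\le Cn^{1-p/2}$, and then invokes Feller's extended continuity theorem to recover convergence of $\nu_n^t(u,\infty)$ itself. The Laplace-transform route is not merely cosmetic: after integrating out the exponential clocks $e_{n,j}$, the quantity $\EE_{\pi_n}(e^{-vZ_n}\mid\FF^J)$ becomes a product $\prod_j(1+vc_n^{-1}e^{\beta\sqrt n U_j})^{-1}$, which is exactly the functional $G(U,v,\theta_n)$ to which the Gaussian interpolation machinery of \cite{BBC08} applies verbatim, both for the mean and for the two-replica variance computation. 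Working with $Q_n^u$ directly would force you to redo those comparison estimates for a non-multiplicative functional. Your monotonicity-in-$u$ subsequence idea would work in principle, but the paper's monotonicity-in-$v$ plus Feller is cleaner.

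Two smaller points. First, your one-line dismissal of (A3-1) is too quick: the paper needs both an explicit Gaussian-tail computation of the truncated mean (yielding the bound $K\epsilon^{1-\alpha}$) \emph{and} a separate concentration estimate of the same $n^{1-p/2}$ type to pass from the annealed to the quenched statement; the elementary bound $\E[e\,\1_{e\le a}]\le a^2/2$ does not by itself control the Gaussian contribution. Second, the threshold $\zeta(p)$ does not arise from the mean computation but from the variance/interpolation step, exactly as in \cite{BBC08}; you should locate it there rather than in the saddle-point for $\E\nu_n^t$.
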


\begin{remark*} This result implies the weaker statement that
%
%e4 ###
%
\begin{equation}
\label{e:barSdef.1}
S_n(t)\equiv
e^{-\gamma N}
S_n(\lfloor t n^{1/2}e^{n\gamma^2/2\beta^2}\rfloor),\qquad
t\ge0,
\end{equation}
converges in the same way in the $M_1$-topology.
\end{remark*}

In~\cite{BBC08} an analogous result is proven, with the same constants
$\zeta(p)$ and $K_p$, but convergence there is in law with respect to
the random
environment (and almost sure with respect to the trajectories $J_n$).
Being able to
obtain convergence under the law of the trajectories for fixed environments,
as we do here, is a considerable conceptual
improvement.

Finally, one must ask whether the convergence of the clock process in the
form obtained here is useful for deriving aging information in the
sense that
we can control the behavior of certain correlation functions. One may be
worried that a jump in limit of the coarse-grained clock process refers to
a period of time during which the process still may make $n^2$ steps,
and our limit result tells us nothing about how the process moves
during that time.
We will, however, show that essentially all this time is spent in
a single visit to a quite small ``trap,'' within which the process does
not make more than $o(n)$
steps.

In this way we prove the almost-sure (or in probability) version of
Theorem 1.2 of~\cite{BBC08}.

\begin{theorem}
\label{t:aging}
Let $A_n^\varepsilon(t,s)$ be the event defined by
%
%e5 ###
%
\begin{equation}
A_n^\varepsilon(t,s)=
\bigl\{
R_n\bigl(
X_n(te^{\gamma n}),
X_n\bigl((t+s)e^{\gamma n}\bigr)\bigr)
\ge1-\varepsilon\bigr\}.
\end{equation}
Then, under the hypothesis of Theorem~\ref{p:main}, for all
$\varepsilon\in(0,1)$, $t>0$ and $s>0$,
%
%e6 ###
%
\begin{equation}
\lim_{N\to\infty}\PP_{\pi_n}(
A_n^\varepsilon(t,s))=
\frac{\sin\alpha\pi}{\pi}
\int_0^{t/(t+s )} u^{\alpha-1}(1-u)^{-\alpha} \,d u.
\end{equation}
Convergence holds $\P$-a.s. if $p>4$, and in $\P$-probability, if
$p=3,4$.
\end{theorem}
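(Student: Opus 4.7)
The plan is to reduce $A_n^\varepsilon(t,s)$ to an event measurable with respect
to the rescaled clock process of Theorem~\ref{p:main}, and then invoke the
classical Dynkin--Lamperti generalized arcsine law for $\alpha$-stable subordinators
with $\alpha=\gamma/\beta^2$. Writing $\kappa_n(r)$ for the unique index with
$\wt S_n(\kappa_n(r))\le r e^{\gamma n}<\wt S_n(\kappa_n(r)+1)$, one has
$X_n(re^{\gamma n})=J_n(\kappa_n(r))$, and $A_n^\varepsilon(t,s)$ is exactly the
event that $J_n(\kappa_n(t))$ and $J_n(\kappa_n(t+s))$ have normalised overlap
$\ge 1-\varepsilon$. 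I will show that, modulo an $o(1)$ error $\P$-a.s., this
coincides with the event $B_n(t,s)$ that the rescaled blocked clock process
admits one and the same jump straddling both $t$ and $t+s$. Theorem~\ref{p:main}
together with a continuous-mapping argument will then identify the limit probability.

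For the inclusion $B_n(t,s)\subset A_n^\varepsilon(t,s)$ up to $o(1)$, I argue that
within any single block carrying a jump of the limit subordinator
$V_{\gamma/\beta^2}(K_p\cdot)$, the real-time contribution is asymptotically
concentrated on a single chain-step, namely the visit to a deep trap site
$x^\star=J_n(j^\star)$ with anomalously large $\tau_n(x^\star)$. Indeed, the
verification of Condition~(A2-1) in the proof of Theorem~\ref{p:main}
(essentially the maximum-domination estimates of \cite{BBC08}) shows that the
second-largest contribution inside a block is $o(1)$ relative to the largest.
Hence on $B_n(t,s)$ both $X_n(te^{\gamma n})$ and $X_n((t+s)e^{\gamma n})$ are
equal to $x^\star$, giving $R_n=1\ge 1-\varepsilon$.

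The reverse inclusion $A_n^\varepsilon(t,s)\subset B_n(t,s)$ up to $o(1)$ is the
main technical step. If $B_n(t,s)$ fails, then the real-time interval
$[te^{\gamma n},(t+s)e^{\gamma n}]$ either straddles a block boundary or is
contained in a block that does not carry a macroscopic jump; in either case
the chain-step interval $[\kappa_n(t),\kappa_n(t+s)]$ contains at least
$\ell_n$ steps with $\ell_n\gg n\log n$, the latter being the mixing time of
simple random walk on $\{-1,1\}^n$. Applying the mixing Condition~(A1-1),
verified for this model in the course of proving Theorem~\ref{p:main}, at the
(random) index $\kappa_n(t+s)$ shows that $J_n(\kappa_n(t+s))$ is,
conditionally on $J_n(\kappa_n(t))$, approximately $\pi_n$-distributed.
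Since $\pi_n$ is uniform on $\{-1,1\}^n$, the probability that the endpoint
lies within a Hamming ball of radius $\varepsilon n/2$ around any fixed centre is
bounded by $e^{-nI(\varepsilon)+o(n)}$ with $I(\varepsilon)>0$. Summed over the
(tight) number of blocks that intersect $[t,t+s]$ under the subordinator limit,
this contribution is $o(1)$.

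The main obstacle is the last step: applying the mixing estimate at the
\emph{random} chain-step index $\kappa_n(t+s)$, which depends on the exponential
clocks and on the environment. The natural remedy is to condition on the last
block boundary before $\kappa_n(t+s)$ and use the strong Markov property of
$J_n$ together with independence of the clocks beyond that boundary; this
decouples the mixing step from the randomness of $\kappa_n$ at the cost of a
harmless loss of $\th_n$ extra chain-steps. Granted the reduction,
Theorem~\ref{p:main} delivers $\P$-a.s.\ $J_1$-convergence of the rescaled
clock process to $V_{\gamma/\beta^2}(K_p\cdot)$; since the functional
``has a single jump straddling $[t,t+s]$'' is continuous at $\P$-a.e.\ path of
$V_{\gamma/\beta^2}$ (its jump-times have continuous distributions, so no jump
starts or ends exactly at $t$ or $t+s$), the continuous-mapping theorem yields
$\P$-a.s.\ convergence of $\PP_{\pi_n}(B_n(t,s))$ to the probability that a
single jump of $V_{\gamma/\beta^2}(K_p\cdot)$ straddles $[t,t+s]$. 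By scaling,
the constant $K_p$ drops out of the ratio $t/(t+s)$, and the classical
generalized arcsine identity
\[
\P\bigl(\text{single jump of }V_\alpha\text{ straddles }[t,t+s]\bigr)
=\tfrac{\sin\alpha\pi}{\pi}\int_0^{t/(t+s)}u^{\alpha-1}(1-u)^{-\alpha}\,du
\]
then yields the stated limit.
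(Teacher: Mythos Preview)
Your overall architecture---reduce $A_n^\varepsilon(t,s)$ to the event $\{\RR_n\cap(t,t+s)=\emptyset\}$ for the blocked clock, then invoke $J_1$-convergence and the Dynkin--Lamperti arcsine law---is exactly the paper's. The serious gap is in your argument for the inclusion $B_n(t,s)\subset A_n^\varepsilon(t,s)$.

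You claim that within a contributing block the real time is concentrated on a \emph{single chain-step} $j^\star$, so that $X_n(te^{\gamma n})=X_n((t+s)e^{\gamma n})=J_n(j^\star)$. This is false in the $p$-spin model, and is in fact the reason blocking is introduced in the first place: because $\cov(H_n(x),H_n(x'))=nR_n(x,x')^p$, neighbouring sites along the walk have strongly correlated depths, so a block that carries a macroscopic jump typically does so through a \emph{cluster} of many chain-steps, not one. Condition~(A2-1) is a statement about convergence of the $\pi_n$-averaged block-tail $k_n(t)\sum_x\pi_n(x)Q_n^u(x)$ to the L\'evy measure; it says nothing about domination of a block by its single largest term, and no such max-domination estimate appears in the verification of (A2-1) here or in \cite{BBC08}.

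What the paper actually proves for this inclusion is weaker but sufficient: on $B_n(t,s)\cap(A_n^\varepsilon(t,s))^c$, the block's time must be spread over sites that are both (i) far apart in Hamming distance and (ii) each carrying a non-negligible fraction of the block time. Possibility~(i)+(ii) is killed by a Gaussian two-point estimate (if $H_n(x)\ge an$ and $R_n(x,x')<1-\rho$ then $H_n(x')\ge an(1-p\rho/4)$ has probability $\le e^{-na^2/2-cn}$), which localises all sizeable contributions to a ball of radius $o(n)$; the remaining possibility---that the walk leaves this ball and later re-enters it within $\th_n\sim n^2$ steps---is then ruled out by a simple-random-walk large-deviation bound via Cram\'er's rate function. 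Both estimates are taken in $\E$-expectation and summed over $k_n(N)$ blocks to get $\P$-a.s.\ decay. Your single-step claim short-circuits precisely this analysis, and without it the inclusion does not follow.

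A smaller issue: for the reverse inclusion $A_n^\varepsilon\cap B_n^c$ your mixing argument asserts that $\kappa_n(t+s)-\kappa_n(t)\gg n\log n$ whenever $B_n$ fails, but $B_n^c$ only says some $\bar S_n(k)$ lands in $(t,t+s)$; the chain-step gap $\kappa_n(t+s)-\kappa_n(t)$ can still be $O(1)$ across a block boundary. The paper does not attempt this route and simply invokes the corresponding argument from \cite{BBC08} unchanged.
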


The remainder of the paper is organised as follows. In the next section we
prove Theorems~\ref{2.theo1} and~\ref{main.1}.
In Section~\ref{sec3} we apply our main theorem to the $p$-spin SK model
and prove Theorem~\ref{t:aging}.

%s2 ###
\section{Proof of the main theorems}\label{sec2}

We now prove our main theorem. The first step is the proof of
Theorem~\ref{2.theo1}.

%s2.1 ###
\subsection{\texorpdfstring{Proof of Theorem \protect\ref{2.theo1}}{Proof of Theorem 1.2}}
\mbox{}
\begin{pf} Throughout we fix a realization ${\omega}\in\O$ of the random
environment but do not make this explicit in the notation.
We set
\begin{equation}
\wh S^b_n(t)\equiv S_n^b(t)-
c_n^{-1}\lambda_n^{-1}(J_n(0))e_{n,0}.
\label{2.4}
\end{equation}
Condition~\ref{coa0'} ensures that $S^b_n -\wh S_n^b$ converges to
zero, uniformly.
Thus we must show that under Conditions~\ref{coa1} and~\ref{coa2},
\begin{equation}
\wh S^b_n(\cdot)\Rightarrow S_\nu(\cdot) .
\label{2.7}
\end{equation}
This will be a simple corollary of Theorem~\ref{dr.1}.
Recall that
\begin{equation}
k_n(t)\equiv\bigl\lf\lf a_n t\rf/\theta_n\bigr\rf,
\label{2.8}
\end{equation}
and for $i\geq1$, define
\begin{equation}
Z_{n,i} \equiv\sum_{j=\theta_n(i-1)+1}^{\theta_ni}c_n^{-1}{\lambda}
_n^{-1}(J_n(j))e_{n,j} .
\label{2.9}
\end{equation}
By (\ref{blocked.1}) and (\ref{2.4}), $\wh S^b_n(t)= \sum
_{i=1}^{k_n(t)}Z_{n,i}$.
We now want to apply Theorem~\ref{dr.1} to the latter partial sum process.
For this let $\{\FF_{n,i}, n\geq1, i\geq0\}$
be the array of sub-sigma fields of $\FF^X$ defined by (with obvious notation)
$\FF_{n,i}
={\sigma}(\bigcup_{j\leq\theta_ni} \{J_n(j),e_{n,j}\}
)$,
for $i\geq0$.
Clearly, for each $n$ and $i\geq1$, $Z_{n,i}$ is $\FF_{n,i}$
measurable and $\FF_{n,i-1}\subset\FF_{n,i}$.
Next observe that
\begin{eqnarray}\label{2.12}
&&\PP_{\mu_n}(Z_{n,i}>z | \FF_{n,i-1})
\nonumber
\\[-8pt]
\\[-8pt]
\nonumber
&&\qquad=\sum_{x\in\VV_n}\PP_{\mu_n}\bigl(J_n\bigl(\theta_n(i-1)+1\bigr)=x,
Z_{n,i}>z | \FF_{n,i-1}\bigr),
\end{eqnarray}
where
\begin{eqnarray}\label{2.13}
&&\PP_{\mu_n}\bigl(J_n\bigl(\theta_n(i-1)+1\bigr)=x, Z_{n,i}>z
|
\FF_{n,i-1}\bigr)
\nonumber
\\[-8pt]
\\[-8pt]
\nonumber
&&\qquad=\PP_{\mu_n}\bigl(J_n\bigl(\theta_n(i-1)+1\bigr)=x, Z_{n,i}>z
| J_n\bigl(\theta_n(i-1)\bigr)\bigr).
\\\nonumber
\end{eqnarray}
Using Bayes' theorem and the Markov property, the last line can be
written as
\begin{equation}
 p_n\bigl(J_n\bigl(\theta_n(i-1)\bigr),x\bigr)
\PP_{\mu_n}\Biggl(\sum_{j=1}^{\theta_n}c_n^{-1}{\lambda}
_n^{-1}\bigl(J_n(j-1)\bigr)e_{n,j-1}>z | J_n(0)=x\Biggr).
\label{2.13bis}\hspace*{-35pt}
\end{equation}
Thus, in view of (\ref{2.1}), (\ref{2.2}), (\ref{2.3.0}) and (\ref
{2.3}), it
follows from (\ref{2.12}), (\ref{2.13}) and~(\ref{2.13bis}) that
\begin{eqnarray}\label{2.14}
\sum_{i=1}^{k_n(t)}
\PP_{\mu_n}(Z_{n,i}>z \vert \FF_{n,i-1})
&=&
\sum_{i=1}^{k_n(t)}
\sum_{x\in\VV_n}p_n\bigl(J_n\bigl(\theta_n(i-1)\bigr),x\bigr)Q^{u}_n(x)
\nonumber\\
&=&
\sum_{i=1}^{k_n(t)}F^{u}_n\bigl(J_n\bigl(\theta_n(i-1)\bigr)\bigr)
\\
&=&
\nu_n^{J,t}(u,\infty)
.\nonumber
\end{eqnarray}
Similarly we get
\begin{eqnarray}\label{2.15}
\sum_{i=1}^{k_n(t)}
[\PP_{\mu_n}(Z_{n,i}>{\varepsilon}\vert \FF_{n,i-1})]^2
&=&
\sum_{i=1}^{k_n(t)}\bigl[
F^{u}_n\bigl(J_n\bigl(\theta_n(i-1)\bigr)\bigr)
\bigr]^2
\nonumber
\\[-8pt]
\\[-8pt]
\nonumber
&=&({\sigma}_n^{J,t})^2(u,\infty) .
\end{eqnarray}
From (\ref{2.14}) and (\ref{2.15}) it follows that Conditions \ref
{coa2} and~\ref{coa1} of
Theorem~\ref{2.theo1} are exactly the conditions from Theorem~\ref{dr.1}.
Similarly Condition~\ref{coa3} is Condition~\ref{careful.20}.
Therefore the conditions of Theorem~\ref{dr.1} are verified, and so
$
\wh S^b_n\Rightarrow S_\nu
$
in $D([0,\infty))$ where $S_\nu$ is a subordinator with L\'evy measure
$\nu$ and zero drift.
\end{pf}

%s2.2 ###
\subsection{\texorpdfstring{Proof of Theorem \protect\ref{main.1}}{Proof of Theorem 1.3}}
The proof of Theorem~\ref{main.1} comes in two steps. In the first we
use the ergodic properties of the chain $J_n$ to pass from sums along
a chain $J_n$ to averages with respect to the invariant measure of $J_n$.

We assume from now on that the initial distribution $\mu_n$ is the
invariant measure~$\pi_n$ of the jump chain $J_n$.

\begin{proposition}{\label{3.prop1}} Let $\mu_n=\pi_n$. Assume that
Condition~\ref{coa1.1} is satisfied. Then, choosing $\theta_n\geq\ell_n$,
the following holds:
for all $t>0$ and all $u>0$, we have that for all ${\varepsilon}>0$,
\begin{eqnarray}\label{3.prop1.1}
&& P_{\pi_n}\bigl(|\nu_n^{J,t}(u,\infty)-\nu_n^{t}(u,\infty
)
|
\geq{\varepsilon}\bigr)
\nonumber
\\[-8pt]
\\[-8pt]
\nonumber
&&\qquad\leq
{\varepsilon}^{-2}[\rho_n(\nu_n^{t}(u,\infty))^2+({\sigma}
_n^{t})^2(u,\infty)] ,
\end{eqnarray}
and
\begin{equation}
P_{\pi_n}\bigl(({\sigma}_n^{J,t})^2(u,\infty)\geq{\varepsilon}\bigr)
\leq
{{\varepsilon}}^{-1}({\sigma}_n^{t})^2(u,\infty) .
\label{3.prop1.2}
\end{equation}
\end{proposition}

\begin{pf} To simplify notation, we only give the proof for the
case when the chain $J_n$ is aperiodic, that is, $q=1$. Details of how to
deal with the general periodic case can be found in the proof of
Proposition 4.1 of~\cite{G10a}.

Let us first establish that
\begin{eqnarray}\label{3.prop1.8.0}
E_{\pi_n}[\nu_n^{J,t}(y)]&=&\nu_n^{t}(u,\infty) ,
\\
E_{\pi_n}[({\sigma}_n^{J,t})^2(u,\infty)]
&=&({\sigma}_n^{t})^2(u,\infty) .
\label{3.prop1.8}
\end{eqnarray}
To this end set
\begin{equation}
\pi_n^{J,t}(x)={k^{-1}_n(t)}\sum_{j=1}^{k_n(t)}\1_{\{J_n(\theta
_n(j-1))=x\}} ,\qquad x\in\VV_n .
\label{3.prop1.4}
\end{equation}
Then, equations (\ref{2.3.0}) and (\ref{2.3}) may be rewritten as
\begin{eqnarray}
\nu_n^{J,t}(u,\infty)
&=&k_n(t)\sum_{y\in\VV_n}\pi_n^{J,t}(y)F^{u}_n(y) ,
\\
({\sigma}_n^{J,t})^2(u,\infty)
&=&
k_n(t)\sum_{y\in\VV_n}\pi_n^{J,t}(y)(F^{u}_n(y))^2 .
\label{3.prop1.5}
\end{eqnarray}
Since by assumption the initial distribution is the invariant measure
$\pi_n$ of $J_n$,
the chain variables $(J_n(j), j\geq1)$ satisfy
$
P_{\pi_n}(J_n(j)=x)=\pi_n(x)
$
for all $x\in\VV_n$, and all $j\geq1$. Hence
\begin{eqnarray}\label{3.prop1.7}
E_{\pi_n}[\pi_n^{J,t}(y)]&=&\pi_n(y) ,
\\\label{3.prop1.7'}
E_{\pi_n}[\nu_n^{J,t}(u,\infty)]&=&k_n(t)\sum_{x\in
\VV_n}\pi
_n(x)F^{u}_n(x) ,
\\\label{3.prop1.7''}
E_{\pi_n}[({\sigma}_n^{J,t})^2(u,\infty)]&=&k_n(t)\sum_{x\in
\VV
_n}\pi_n(x)(F^{u}_n(x))^2 ,
\end{eqnarray}
and equations (\ref{3.prop1.8.0}) and (\ref{3.prop1.8}) now follow readily
from these identities.
Indeed, inserting (\ref{2.2}) into (\ref{3.prop1.7'})
and using that $\pi_n$ is the invariant measure of $J_n$, we get
\begin{eqnarray}\label{3.prop1.07}
E_{\pi_n}[\nu_n^{J,t}(u,\infty)]
&=&k_n(t)\sum_{y\in\VV_n}\sum_{x\in\VV_n}\pi
_n(x)p_n(x,y)Q^{u}_n(y) ,
\\
&=&k_n(t)\sum_{y\in\VV_n}\pi_n(y)Q^{u}_n(y) ,
\end{eqnarray}
which proves (\ref{3.prop1.8.0}).
Similarly, inserting (\ref{2.2}) into (\ref{3.prop1.7''}) yields
\begin{equation}\label{3.prop1.007}
E_{\pi_n}[({\sigma}_n^{J,t})^2(u,\infty)]
=k_n(t)\sum_{x\in\VV_n}\pi_n(x)\biggl(\sum_{y\in\VV
_n}p_n(x,y)Q^{u}_n(y)\biggr)^2 ,
\end{equation}
which gives (\ref{3.prop1.8}), once observed that, by reversibility,
$
\sum_{x\in\VV_n}\pi_n(x)p_n(x,y)\times p_n(x,y')
=\pi_n(y)\sum_{x\in\VV_n}p_n(y,x)p_n(x,y')
=\pi_n(y)p^{(2)}_n(y,y')
$.

We are now ready to prove the proposition.
In view of (\ref{3.prop1.8}), (\ref{3.prop1.2}) is nothing but a first
order Chebyshev inequality.
To establish (\ref{3.prop1.1}) set
\begin{equation}
\qquad\quad\D_{ij}(x,y)=P_{\pi_n}\bigl(J_n\bigl(\theta_n(i-1)\bigr)=x,
J_n\bigl(\theta
_n(j-1)\bigr)=y\bigr)-\pi_n(x)\pi_n(y) .
\end{equation}
A second-order Chebyshev inequality together with expressions (\ref
{3.prop1.7'})
of $E_{\pi_n}[\nu_n^{J,t}(u,\infty)]$ yield
\begin{eqnarray}\label{3.prop1.9}
&&P_{\pi_n}\bigl(|\nu_n^{J,t}(u,\infty)-E_{\pi_n}[\nu
_n^{J,t}(u,\infty)]|\geq{\varepsilon}\bigr)
\nonumber\\
&&\qquad\leq{\varepsilon}^{-2}E_{\pi_n}\biggl[k_n(t)\sum_{y\in\VV
_n}\bigl(\pi
_n^{J,t}(y)-\pi_n(y)\bigr)F^{u}_n(y)\biggr]^2
\\
&&\qquad={\varepsilon}^{-2}\sum_{x\in\VV_n}\sum_{y\in\VV
_n}F^{u}_n(x)F^{u}_n(y)\sum
_{i=1}^{k_n(t)}\sum_{j=1}^{k_n(t)}\D_{ij}(x,y) .\nonumber
\end{eqnarray}
Now
$
\sum_{i=1}^{k_n(t)}\sum_{j=1}^{k_n(t)}\D_{ij}(x,y)=(\overline
{I})+(\overline{\mathit{II}})
$
where
\begin{equation}
(\overline{I})\equiv\sum_{i=1}^{k_n(t)}\sum_{j=1}^{k_n(t)}\D
_{ij}(x,y)\1
_{\{j\neq i\}}
\leq\rho_n k^2_n(t) \pi_n(x)\pi_n(y) ,
\label{3.prop1.10}
\end{equation}
as follows from Condition (A1-1), choosing $\theta_n\geq\ell_n$, and
\begin{eqnarray}\label{3.prop1.11}
(\overline{\mathit{II}})
&\equiv&\sum_{1\leq i\leq k_n(t)}\D_{ii}(x,x)\1_{\{x=y\}}
\nonumber\\
&=&k_n(t)\bigl[P_{\pi_n}\bigl(J_n\bigl(\theta_n(i-1)\bigr)=x\bigr
)-\pi
^2_n(x)\bigr]\1_{\{x=y\}}
\\
&=&k_n(t)\pi_n(x)\bigl(1-\pi_n(x)\bigr)\1_{\{x=y\}} .\nonumber
\end{eqnarray}
Inserting (\ref{3.prop1.11}) and (\ref{3.prop1.10}) in (\ref
{3.prop1.9}) we
obtain, using again (\ref{3.prop1.8}) and (\ref{3.prop1.7}), that
\begin{eqnarray}\label{3.prop1.12}
&& P_{\pi_n}\bigl(|\nu_n^{J,t}(u,\infty)-E_{\pi_n}[\nu
_n^{J,t}(u,\infty)]|\geq{\varepsilon}\bigr)
\nonumber
\\[-8pt]
\\[-8pt]
\nonumber
&&\qquad\leq
{\varepsilon}^{-2}\bigl[\rho_n(\nu_n^{t}(u,\infty))^2+({\sigma}
_n^{t})^2(u,\infty)\bigr] .
\end{eqnarray}
Proposition~\ref{3.prop1} is proven.
\end{pf}

\begin{pf*}{Proof of Theorem \protect\ref{main.1}}
The proof of Theorem~\ref{main.1} is now immediate: combine the conclusions
of Proposition~\ref{3.prop1} with Condition~\ref{coa2.1} to get both
conditions
\ref{coa1} and~\ref{coa2}.
Finally, Condition~\ref{coa3} is Condition~\ref{coa3.1}, since we are
starting from the
invariant measure.
\end{pf*}

%s3 ###
\section{Application to the $p$-spin SK model}\label{sec3}

In this section we show how Conditions~\ref{coa1.1} and~\ref{coa2.1} can be
verified in
the case of the random hopping time dynamics of the $p$-spin SK model.

The proof contains four steps, two of which are quite immediate.

Conditions~\ref{coa1.1} for simple random walk has been established, for
example, in
\cite{BBC08} and~\cite{G10b}. The following lemma is taken from
Proposition 3.12 of~\cite{G10b}.

\begin{lemma}\label{mixing.1} Let $P_{\pi_n}$ be the law of the simple
random walk on the hypercube $\S_n$ started in the uniform distribution.
Let $\th_n=\frac{3 \ln2}2 n^2$. Then, for any $x,y\in\S_n$ and any
$i\geq0$,
\begin{equation}\label{mixing.2}\quad
\Biggl| \sum_{k=0}^1P_{\pi_n}\bigl(J_n(\th_n+i+k)=y,J_n(0)=x\bigr)
-2\pi_n(x)\pi_n(y)\Biggr|\leq2^{-3n+1}.
\end{equation}
\end{lemma}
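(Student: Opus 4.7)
The plan is to use the explicit spectral decomposition of simple random walk on the hypercube $\S_n=\{-1,1\}^n$. The transition matrix $P_n$ of $J_n$ is symmetric and, in the Walsh basis $\chi_S(x)=\prod_{i\in S}x_i$ indexed by $S\subseteq\{1,\dots,n\}$, is diagonal with eigenvalues $\lambda_S=1-2|S|/n$. Starting from $\pi_n(x)=2^{-n}$ and applying the spectral expansion of $P_n^t$ gives
\[
P_{\pi_n}\bigl(J_n(t)=y,\,J_n(0)=x\bigr)=2^{-n}P_n^t(x,y)=2^{-2n}\sum_{S\subseteq\{1,\dots,n\}}\lambda_S^{\,t}\chi_S(x)\chi_S(y).
\]
The $S=\emptyset$ term contributes exactly $2^{-2n}=\pi_n(x)\pi_n(y)$, so subtracting the stationary product leaves only the sum over $S\neq\emptyset$.

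Next I would carry out the sum over $k=0,1$ with $t=\th_n+i+k$, which collapses the two-term inner sum into a factor $1+\lambda_S$ multiplying $\lambda_S^{\th_n+i}\chi_S(x)\chi_S(y)$. The crucial observation, and the reason the lemma averages over two consecutive steps, is that for $|S|=n$ the eigenvalue is $\lambda_S=-1$, hence $1+\lambda_S=0$: precisely the modes responsible for the period-$2$ oscillation of the bipartite walk are annihilated. What remains is
\[
\sum_{k=0}^{1}P_{\pi_n}\bigl(J_n(\th_n+i+k)=y,\,J_n(0)=x\bigr)-2\pi_n(x)\pi_n(y)=2^{-2n}\!\sum_{0<|S|<n}\lambda_S^{\th_n+i}(1+\lambda_S)\chi_S(x)\chi_S(y).
\]

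For this residual sum I would use the uniform bounds $|\chi_S(x)\chi_S(y)|\le 1$, $|1+\lambda_S|\le 2$, and the spectral gap estimate $|\lambda_S|\le 1-2/n$ valid for $0<|S|<n$ (the maximum being attained at $|S|=1$ and $|S|=n-1$). The choice $\th_n=\tfrac{3\ln 2}{2}n^2$ yields
\[
(1-2/n)^{\th_n+i}\le(1-2/n)^{\th_n}\le e^{-2\th_n/n}=2^{-3n}.
\]
Combining with the trivial count $\sum_{0<|S|<n}1\le 2^n$, the whole expression is bounded by $2^{-2n}\cdot 2\cdot 2^n\cdot 2^{-3n}=2^{-4n+1}$, which is well within the stated bound $2^{-3n+1}$. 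There is no real obstacle here; the only step of genuine content is recognising that the sum over $k=0,1$ is exactly the device that removes the $\lambda_S=-1$ modes created by the bipartite structure of the hypercube, after which the spectral gap $2/n$ combined with the quadratic-in-$n$ choice of $\th_n$ delivers the exponential mixing.
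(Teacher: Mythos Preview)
Your argument is correct. The spectral decomposition of the simple random walk on the hypercube in the Walsh basis is the natural tool here, and every step you outline is sound: the sum over $k=0,1$ indeed kills the $|S|=n$ eigenvalue $-1$ coming from the bipartite structure, the remaining eigenvalues satisfy $|\lambda_S|\le 1-2/n$, and the choice $\th_n=\tfrac{3\ln 2}{2}n^2$ turns this into the factor $2^{-3n}$. Your final bound $2^{-4n+1}$ is even stronger than the stated $2^{-3n+1}$.

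As for comparison: the paper does not actually prove this lemma. It simply quotes the result as Proposition~3.12 of \cite{G10b} and moves on. So your proposal is not an alternative to the paper's proof but rather a self-contained proof where the paper gives none; the spectral argument you give is exactly the standard one underlying the cited result.
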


Clearly this implies that Condition~\ref{coa1.1} holds.

We now turn to the first part of Condition~\ref{coa2.1}. We will show that
\begin{equation}\label{pspin.10}
\nu_n^t(u,\infty)\rightarrow\nu^t(u,\infty)=t K_p u^{-\g/\b^2},
\end{equation}
almost surely, respectively, in probability, as $n\uparrow\infty$.

%s3.1 ###
\subsection{Laplace transforms} Instead of proving the convergence of
the distribution functions $\nu_n^t$ directly, we pass to their
Laplace transforms, prove their convergence and then use
Feller's continuity lemma to deduce convergence of the original
objects.

For $v> 0$, consider the Laplace transforms
\begin{eqnarray}\label{0}
\hat\nu_n^{t}(v)&=&\int_{0}^{\infty} due^{-uv}\nu_n^{t}(u,\infty
),
\nonumber
\\[-8pt]
\\[-8pt]
\nonumber
\hat\nu^{t}(v)&=&\int_{0}^{\infty} due^{-uv}\nu^{t}(u,\infty).
\end{eqnarray}
With $Z_n\equiv\sum_{j=0}^{\th_n-1} c_n^{-1}{\lambda
}_n^{-1}(J_n(j))e_{n,j}$,
we have,
by definition of $\nu_n^{t}(u,\infty)$,
\[
\nu_n^{t}(u,\infty)=k_n(t)\sum_{x\in\VV_n}\pi_n(x)Q^{u}_n(x)=
k_n(t)\PP_{\pi_n}(Z_{n}>u).
\label{3}
\]
Hence
\begin{eqnarray}\label{4}
\hat\nu_n^{t}(v)&=&\int_{0}^{\infty} due^{-uv}\nu_n^{t}(u,\infty)
\nonumber\\
&=&k_n(t)\int_{0}^{\infty} due^{-uv}\PP_{\pi_n}(Z_n>u)
\\
&=&k_n(t)\frac{1-\EE_{\pi_n}(e^{-vZ_n})}{v},\nonumber
\end{eqnarray}
where the last equality follows by integration by parts.

%s3.2 ###
\subsection{\texorpdfstring{Convergence of $\E\hat\nu_n^t(v)$}{Convergence of E nu n t(v)}}

The following lemma is an easy consequence of the results of~\cite{BBC08}:

\begin{lemma}\label{pspin.11} Let $c_n=e^{\g n}$, $a_n= n^{1/2} e^{n
\g
^2/2\b^2}$.
For any $p\geq3$, and $\b,\g>0$ such that $\g/\b^2\in(0,1)$, there
exists a
finite positive constant, $K_p$, such that for any $v>0$,
\begin{equation}\label{pspin.12}
\lim_{n\uparrow\infty} k_n(t) \E
[1-\EE_{\pi_n}(e^{-vZ_n})] =K_pt v^{\g/\b^2}.
\end{equation}
\end{lemma}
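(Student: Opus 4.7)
\medskip

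The plan is to reduce the claim to the annealed Laplace-transform computation carried out in \cite{BBC08}. First, conditioning on the chain $J_n$ and integrating out the independent exponentials $e_{n,j}$, I would rewrite
\be
\EE_{\pi_n}\left(e^{-vZ_n}\right) = E_{\pi_n}\left[\prod_{j=0}^{\theta_n-1}\frac{1}{1 + v c_n^{-1}\lambda_n^{-1}(J_n(j))}\right],
\ee
where $\lambda_n^{-1}(x)=2^n e^{\beta H_n(x)}$ depends only on the Gaussian environment. Taking the expectation $\E$ over $H_n$ then produces an annealed product functional of $H_n$ evaluated along a simple random walk path of length $\theta_n$ on the hypercube.

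Second, I would identify this functional with the object analysed in \cite{BBC08}. Their key estimate rests on a Gaussian comparison: for $p\geq 3$ and $\gamma$ in the admissible range $0<\gamma<\min(\beta^2,\zeta(p)\beta)$, the covariance $nR_n(x,y)^p$ decays fast enough in the Hamming distance that the values $\{H_n(J_n(j))\}_{j=0}^{\theta_n-1}$ along a typical walk trajectory can be coupled, modulo errors harmless for the Laplace transform, to an i.i.d.\ family of Gaussians. Combined with an extreme-value analysis of the scaled weights $c_n^{-1}\lambda_n^{-1}(x)=2^n e^{\beta H_n(x)-\gamma n}$, which become heavy-tailed of index $\gamma/\beta^2$, this yields the single-block asymptotic
\be
\E\left[1 - \EE_{\pi_n}(e^{-vZ_n})\right] = \frac{\theta_n}{a_n}\,K_p v^{\gamma/\beta^2}\bigl(1+o(1)\bigr),
\ee
with the same constant $K_p>0$ appearing in Theorem~\ref{p:main}. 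Multiplying by $k_n(t)=\lfloor\lfloor a_n t\rfloor/\theta_n\rfloor\sim a_n t/\theta_n$ would then give the claimed limit $K_p t v^{\gamma/\beta^2}$.

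The main obstacle is extracting the one-block asymptotic in the form displayed above from \cite{BBC08}, whose theorem is phrased as a convergence statement for the full clock process rather than for individual blocks. However, their proof is fundamentally a one-block computation: the whole-process Laplace transform is obtained as essentially the $k_n(t)$-th power of the single-block one, and the limit $e^{-K_p t v^{\gamma/\beta^2}}$ forces precisely the one-block estimate above. The only point that must be verified with care is that the error term in the Gaussian comparison is $o(\theta_n/a_n)$, not merely $o(1)$. Since $\theta_n\sim n^2$ is polynomial in $n$ while $a_n$ is exponential in $n$, during a single block the walk visits only polynomially many sites, so the comparison is even sharper than for the full trajectory of length $\lfloor a_n t\rfloor$ treated in \cite{BBC08}; this should make the verification essentially a bookkeeping exercise on top of the estimates already established there.
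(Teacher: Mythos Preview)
Your proposal is correct and follows essentially the same route as the paper. Two minor sharpenings: first, the comparison process in \cite{BBC08} (and in the paper) is not literally an i.i.d.\ family but a block-independent Gaussian process at an intermediate scale $\nu\sim n^\rho$, $\rho\in(1/2,1)$; the paper then writes $\E G(U^1,v,\theta_n)=[\E G(U^1,v,\nu)]^{\theta_n/\nu}$ and invokes Proposition~2.1 of \cite{BBC08} for the $\nu$-block, which gives exactly your displayed one-block asymptotic after the trivial expansion $1-(1-x)^{\theta_n/\nu}\sim(\theta_n/\nu)x$. Second, the restriction $\gamma<\zeta(p)\beta$ is not needed here---it enters only in the concentration estimate for $\hat\nu_n^t$, not in the computation of its mean; the lemma holds under the weaker hypothesis $\gamma/\beta^2\in(0,1)$ as stated.
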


\begin{pf} We rely essentially on the results of~\cite{BBC08}.
In that paper the Laplace transforms $\E e^{-v Z_n}$ were computed even for
$\th_n= a_nt$. We just recall the key ideas and the main steps.

The point in~\cite{BBC08} is to first fix a realization of the chain $J_n$,
and to define, for a given realization, the one-dimensional normal
Gaussian process
\begin{equation}\label{jiri.1}
U^0(i)\equiv n^{-1/2} H_n(J_n(i)),
\end{equation}
with covariance
\begin{equation}\label{jiri.2}
\L_{ij}^0= n^{-1} \E H_n(J_n(i))H_n(J_n(j)) = R_n(J_n(i),J_n(j))^p.
\end{equation}
Moreover, they define a comparison process, $U^1$, as follows. Let $\nu
$ be an integer of order $n^\rho$, with $\rho\in(1/2,1)$. Then $U^1$
has covariance matrix
\begin{equation}\label{jiri.2-1}
\L^1_{ij}=
\cases{ 1-2pn^{-1}|i-j|,& \quad$\mbox{if } \lf i/\nu\rf=\lf
j/\nu\rf,$\vspace*{2pt}\cr
0,& \quad$\mbox{else}.$}
\end{equation}
Finally they define the interpolating family of processes, for $h\in[0,1]$,
\begin{equation}\label{jiri.3}
U^h(i)\equiv\sqrt h U^1(i)+\sqrt{1-h} U^0(i).
\end{equation}
For any normal Gaussian process, $U$, indexed by $\N$, define the functions
\begin{equation}\label{jiri.4}
F_n(U,v,k)\equiv\exp\Biggl(-v c_n^{-1}\sum_{i=0}^{k-1}
e_{n,i} e^{\b\sqrt n U_i}
\Biggr)\vadjust{\goodbreak}
\end{equation}
and
\begin{equation}
\label{jiri.5.1}\quad
\quad\EE_{\pi_n}( F(U,v,k)\vert \FF^J)
\equiv G(U,v,k)= \exp\Biggl(-\sum_{i=0}^{k-1} g(vc_n^{-1} e^{\b
\sqrt n U_i}
)\Biggr),
\end{equation}
with $g(x)=\ln(1+x)$.

Then the Laplace transforms we are after can be written as
\begin{eqnarray}\label{jiri.5}
\E\EE_{\pi_n} e^{-vZ_n}&=&
\E\EE_{\pi_n} ( \EE_{\pi_n}( e^{-vZ_n}\vert \FF
^J))
\nonumber
\\[-8pt]
\\[-8pt]
\nonumber
&=& E_{\pi_n} \E G(U^0,v,\th_n).
\end{eqnarray}
Here we used that the conditional expectation, given $\FF^J$, is just the
expectation with
respect to the variables $e_{n,i}$, which can be computed explicitly,
and gives
rise to the function $G$.

The idea is now that $U^1$ is a good enough approximation to $U^0$, for most
realizations of the chain $J$, to allow us to replace $U^0$ by $U^1$ in the
last line above.

More precisely, we have the following estimate.

\begin{lemma}\label{jiri.6}
With the notation above we have that for all $p\geq3$,
\begin{equation}\label{jiri.7}
k_n(t) E_{\pi_n} | \E G(U^0,v,\th_n)
- \E G(U^1,v,\th_n)
|\leq tC n^{1/2}/\nu.
\end{equation}
\end{lemma}

\begin{remark*} In~\cite{BBC08} (see Proposition 3.1) it is proven
that $E_{\pi_n}$-almost surely,
\begin{equation}\label{jiri.7.1}
\E G(U^0,v,\lf a_n t\rf)
- \E G(U^1,v,\lf a_nt\rf)
\rightarrow0.
\end{equation}
This result would not be expected for our expression, but we do not
need this.
The proof of Proposition 3.1 of~\cite{BBC08}, however, directly
implies our
Lemma~\ref{jiri.6}.
\end{remark*}

The computation of the expression involving the comparison process
$U^1$ is
fairly easy. First, note that by independence (and making for
simplicity the
assumption that $\th_n$ is an integer multiple of ${\nu}$),
\begin{eqnarray}\label{jiri.8}
\E G(U^1,v,\th_n)
&=& [ \E G(U^1,v,\nu)]^{\th_n/\nu}
\nonumber
\\[-8pt]
\\[-8pt]
\nonumber
&=&
\bigl[1-\bigl(1- \E G(U^1,v,\nu)\bigr)
\bigr]^{\th_n/\nu}.
\end{eqnarray}
But in~\cite{BBC08}, Proposition 2.1, it is shown that
\begin{equation}\label{jiri.9}
a_n\nu^{-1} \bigl(1- \E G(U^1,v,\nu)\bigr) \rightarrow K_pv^{\g
/\b^2}.
\end{equation}
This implies immediately that
\begin{equation}\label{jiri.10}
k_n(t) \bigl\{ 1-\bigl[1-\bigl(1- \E G(U^1,v,\nu)\bigr)
\bigr]^{\th
_n/\nu}\bigr\}
\rightarrow K_pv^{\g/\b^2}t,
\end{equation}
as desired. Combining this with Lemma~\ref{jiri.6}, the assertion of Lemma
\ref{pspin.11} follows.
\end{pf}

%s3.3 ###
\subsection{\texorpdfstring{Concentration of $\nu_n^t$}{Concentration of nu n t}}
To complete the proof, we need to control the fluctuations of $\nu_n^t$.

\begin{lemma}\label{jiri.11}
Under the same hypothesis as in Lemma~\ref{pspin.11},
there exists an increasing function,
$\zeta(p)$, such that for all $p\geq3$, $\zeta(p)>1$, and $\zeta
(p)\uparrow
\sqrt{2\ln2}$, such that, if $\g/\b^2<\min(1,\zeta(p)/\b)$,
\begin{equation}\label{jiri.12}
\E\bigl(\hat\nu_n^t(v)-\E\hat\nu_n(v)\bigr)^{2}\leq C n^{1-p/2}.
\end{equation}
\end{lemma}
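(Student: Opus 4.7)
The target is the variance of $\hat\nu_n^t(v) = (k_n(t)/v)[1 - F(H)]$, where $F(H) \equiv \EE_{\pi_n} e^{-vZ_n}$ is a function of the Gaussian environment $H$ alone (after integrating out $J_n$ and the exponentials $e_{n,j}$). Hence
$$\Var(\hat\nu_n^t(v)) = (k_n(t)/v)^2\bigl(\E F(H)^2 - (\E F(H))^2\bigr),$$
and I would first rewrite $\E F(H)^2$ using two independent copies $(J^{(1)}, e^{(1)})$ and $(J^{(2)}, e^{(2)})$ of the chain and the exponentials, both started from $\pi_n$. After integrating out the two independent families of exponentials one obtains, in the notation of \eqv(jiri.5.1),
$$\E F(H)^2 = E_{\pi_n}^{(1)} E_{\pi_n}^{(2)} \E_H\bigl[G(U^{0,(1)},v,\theta_n)\,G(U^{0,(2)},v,\theta_n)\bigr],$$
where $U^{0,(k)}(i) = n^{-1/2}H(J^{(k)}(i))$, and the joint Gaussian covariance of these two paths involves the cross term $n^{-1}R_n(J^{(1)}(i),J^{(2)}(j))^p$, which is small whenever the two chains sit at weakly overlapping spin configurations.

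The plan is then to upgrade the single-chain comparison of Lemma \thv(jiri.6) to a two-chain version. I would introduce, for each copy $k=1,2$, the corresponding block-cone Gaussian process $U^{1,(k)}$ of \eqv(jiri.2), sampled \emph{independently} across $k$, and apply Slepian-type Gaussian interpolation to the direct sum of the two Gaussian vectors (of total dimension $2\theta_n$). Gaussian integration by parts, exactly as in the proof of Proposition 3.1 of \cite{BBC08}, would split the replacement error $(U^{0,(1)},U^{0,(2)}) \rightsquigarrow (U^{1,(1)},U^{1,(2)})$ into the two single-chain errors of size $O(n^{1/2}/\nu)$ already produced by Lemma \thv(jiri.6), plus a genuinely cross-chain term bounded by a chain-path sum of $|R_n(J^{(1)}(i_1),J^{(2)}(i_2))|^p$. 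For the replaced process $U^{1}$ the covariance is block-diagonal, so the two copies are jointly independent on the event that the two chains occupy disjoint $\nu$-blocks of the index set; on this event the Hamiltonian expectation factorises and therefore matches $(\E F)^2$, again up to the $O(n^{1/2}/\nu)$ error of Lemma \thv(jiri.6).

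What remains is the contribution from (i) simultaneous block-coincidences of the two chains and (ii) the cross-term in the interpolation. Both are controlled by the same fact: under $P_{\pi_n}^{(1)}\otimes P_{\pi_n}^{(2)}$, a typical pair of points $(J^{(1)}(i),J^{(2)}(j))$ satisfies $|R_n|\lesssim n^{-1/2}$, so $|R_n|^p \lesssim n^{-p/2}$, and summing over the $\theta_n$ index pairs within a block, then over the $k_n(t)$ blocks, yields a contribution of order $n^{1-p/2}$ once multiplied by the prefactor $(k_n(t)/v)^2$ and the scaling $k_n(t) \sim t n^{-3/2} e^{n\gamma^2/2\beta^2}$. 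The main obstacle is that one cannot take naive $L^2$ moments of $Z_n$, because $\lambda_n^{-1}(x) = 2^n e^{\beta H(x)}$ is very heavy-tailed; this forces a repetition of the truncated second-moment analysis of \cite{BBC08} (their Proposition 2.1) for the pair of chains, not a single chain. The restriction $\gamma < \min(\beta^2, \zeta(p)\beta)$ with $\zeta(p)\uparrow \sqrt{2\ln 2}$ enters precisely here, to guarantee that these paired truncated second moments remain bounded and do not overwhelm the $n^{1-p/2}$ gain coming from the typical overlap.
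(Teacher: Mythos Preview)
Your setup (two replicated chains, writing the second moment as a $2\theta_n$-dimensional Gaussian functional, and Gaussian interpolation) is exactly the paper's strategy, and your analysis of the genuinely cross-chain contribution via the overlap distribution $2^{-n}\binom{n}{(n-m)/2}$ and the bound $|R_n|^p\sim n^{-p/2}$ matches the paper's computation leading to $Cn^{1-p/2}$. The role of $\zeta(p)$ is also correctly identified.

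However, there is a real gap in your choice of comparison process. You interpolate from $(U^{0,(1)},U^{0,(2)})$ all the way to independent copies of the \emph{block-cone} process $(U^{1,(1)},U^{1,(2)})$, thereby changing both the cross-chain covariances \emph{and} the within-chain covariances ($\Lambda^0\to\Lambda^1$). The within-chain replacement error is what Lemma \thv(jiri.6) controls, but that lemma gives $k_n(t)\,E_{\pi_n}|\E G(U^0)-\E G(U^1)|\leq tCn^{1/2}/\nu$, i.e.\ a bound of size $O(n^{1/2}/\nu)$ only \emph{after one factor of $k_n(t)$}. For the variance you must multiply by $k_n(t)^2$, so each of your ``single-chain errors of size $O(n^{1/2}/\nu)$'' actually contributes $k_n(t)\cdot O(n^{1/2}/\nu)$ to $\Var(\hat\nu_n^t(v))$, and $k_n(t)\sim tn^{-3/2}e^{n\gamma^2/2\beta^2}$ is exponentially large. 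The same blow-up occurs when you pass back from $(E\E G(U^1))^2$ to $(\E F)^2$. These two large errors must cancel (since the end result is small), but your argument does not exhibit or exploit this cancellation.

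The paper avoids the detour entirely: its comparison process $V^1$ keeps the within-chain covariances equal to the \emph{original} $\Lambda^0$ and sets only the cross-chain block to zero. With this choice one has the exact identity $E_{\pi_n}E'_{\pi_n}\E G(V^1,v,2\theta_n)=(\E\EE_{\pi_n}e^{-vZ_n})^2$, so no within-chain replacement is ever performed and no Lemma \thv(jiri.6) error appears. The interpolation formula then contains \emph{only} the cross-chain sum $\sum_{0\le i<\theta_n,\ \theta_n\le j<2\theta_n}\Lambda^0_{ij}\,\E\partial^2_{ij}G(V^h)$, which is precisely the term you correctly bound by $Cn^{1-p/2}$ after averaging over the two independent stationary chains. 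Replacing your intermediate $(U^{1,(1)},U^{1,(2)})$ by this ``cross-decoupled but within-chain intact'' process fixes the argument.
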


\begin{pf} The proof is again very similar to the proof of Proposition
3.1 in~\cite{BBC08}. We have to compute
\begin{equation}\label{jiri.12.1}
\E(\EE_{\pi_n} e^{-vZ_n})^2
=E_{\pi_n}E'_{\pi_n}\bigl(\E\EE_{\pi_n}\EE_{\pi_n}'
\bigl(e^{-v(Z_n+Z_n')}\vert \FF^{J}\times\FF^{J'}\bigr)\bigr),
\end{equation}
where\vspace*{1pt} $Z'_n\equiv\sum_{j=0}^{\th_n-1} c_n^{-1}{\lambda}
_n^{-1}(J'_n(j))e'_{n,j}$, $J'_n$ and $(e'_{n,i} ,n\in\N, i\in\N)$
being, respectively, independent copies of $J_n$ and $(e_{n,i} ,n\in
\N
, i\in\N)$.
To express this as in the previous proof, we introduce
the Gaussian process $V^0$ by
\begin{equation}
\label{jiri.13}
V^0(i)\equiv
\cases{ n^{-1/2} H_n(J_n(i)),& \quad$\mbox{if } 0\leq i\leq
\th_{n}-1,$\vspace*{2pt}\cr
n^{-1/2} H_n(J'_n(i)),& \quad$\mbox{if } \th_n\leq i\leq
2\th_{n}-1.$}
\end{equation}
Then, with the notation of (\ref{jiri.5.1}),
\begin{equation}\label{jiri.14}
\EE_{\pi_n}\EE'_{\pi_n}
\bigl(e^{-v(Z_n+Z_n')}\vert \FF^{J}\times\FF^{J'}\bigr)
=G(V^0,v,2\th_n).
\end{equation}
Next we define the comparison process
$V^1$ with covariance matrix
\begin{equation}\label{jiri.15}
\L^{2}_{ij}\equiv
\cases{ \L^0_{ij},& \quad$\mbox{if }
\max(i, j)< \th_n \mbox{ or } \min(i,j) \geq\th_n,$\vspace
*{2pt}\cr
0,& \quad$\mbox{else}.$}
\end{equation}
The point is that
\begin{equation}\label{jiri.16}
\quad E_{\pi_n}E'_{\pi_n}\E G(V^1,v,2\th_n) = (E_{\pi_n} \E
G(V^0,v,\th
_n))^2=
( \E\EE_{\pi_n} e^{-vZ_n})^2.
\end{equation}
On the other hand, using the standard Gaussian interpolation formula,
we obtain the representation
\begin{eqnarray}\label{jiri.17}
&&\E G(V^1,v,2\th)-\E G(V^0,v,2\th)
\nonumber
\\[-8pt]
\\[-8pt]
\nonumber
&&\qquad=
\frac12 \int_0^1 \mathop{\sum_{0\leq i<\th_n}}_{\th_n\leq j<2\th
_n} \L_{ij}^0 \E\frac{\del^2 G(V^h,v,2\th_n)}
{\del v_i\,\del v_j}\,dh + ( i\leftrightarrow j),
\end{eqnarray}
where the interpolating process $V^h$ is defined analogously to (\ref{jiri.3}).
The second derivatives of $G$ were computed and bounded in \cite
{BBC08} [see equation (3.7) and Lemma~3.2].
We recall the following bounds:

\begin{lemma}\label{jiri.19} With the notation above and the
assumptions of
Lemma~\ref{pspin.11},
\begin{eqnarray}\label{jiri.20}
&&\E\biggl| \frac{\del^2 G(V^h,v,2\th_n)}
{\del v_i\,\del v_j}\biggr|\nonumber\\
&&\qquad\leq v^2c_n^{-2} \b^2n \E\bigl[ e^{\b\sqrt n
(V^h(i)+V^h(j))} \exp\bigl(-2 g\bigl(c_n^{-1}ve^{\b\sqrt n
V^h(i)}\bigr)
\nonumber
\\[-8pt]
\\[-8pt]
\nonumber
&&\hspace*{162pt}\qquad{}-2 g\bigl(c_n^{-1}ve^{\b\sqrt n V^h(j)}\bigr)\bigr)\bigr]
\\
&&\qquad\equiv\Xi_n(\L^h_{ij}).\nonumber
\end{eqnarray}
Moreover, for ${\lambda}>0$ small enough,
%
%e7 ###
%
\begin{equation}
\label{jiri.21}
\Xi_n(c)\leq\bar\Xi_n (c)=
\cases{
C
\bigl({(1-c)^{-1/2}} \land\sqrt n\bigr)
e^{-{(\gamma^2n)}/{(\beta^2(1+c))}},\vspace*{2pt}\cr
\qquad$\mbox{if $1>c>\gamma/ \beta^{2}+\lambda-1 $,}$
\vspace*{2pt}\cr
C n
e^{-n(\beta^2(1+c)-2\gamma)},\vspace*{2pt}\cr
\qquad$\mbox{if $c\le(\gamma/ \beta^{2})+\lambda-1 $,}$}
\end{equation}
where $C(\gamma,\beta,v,\lambda)$
is a suitably chosen constant independent of $n$ and $c$.
\end{lemma}

\begin{remark*}
Notice that, since $\g/\b^2<1$ under our hypothesis, we can always
choose ${\lambda}$ such that the top line in (\ref{jiri.21})
covers the case $c\geq0$.
\end{remark*}

Note that for $c\geq0$
(see equation (3.25) in~\cite{BBC08}),
\begin{equation}\label{jiri.22}
\int_0^{1} \Xi_n\bigl((1-h)c\bigr)\,dh\leq2 C \exp\biggl(-\frac{\gamma
^2n}{\beta^2(1+c)}
\biggr).
\end{equation}
The terms with negative correlation are in principle
smaller than those with positive one, but some thought reveals that one
cannot really gain substantially over the bound
\begin{equation}\label{jiri.23}
\int_{0}^{1} \Xi_n\bigl((1-h)c\bigr)\,dh\leq
C \exp\biggl(-\frac{\gamma^2n}{\beta^2}\biggr),
\end{equation}
that is, used in~\cite{BBC08} [see equation (3.24)].

Next we must compute the probability that $ \L_{ij}^0$ takes on a specific
value. But since $ \L_{ij}^0$ is a function of $R_n(J_n(i),J'_n(j))$,
this turns out
to be very easy, namely, since both chains start in the invariant distribution
\begin{eqnarray}\label{vero.1}
&&\EE_{\pi_n}\EE'_{\pi_n} \1_{nR_n(J_n(i),J'_n(j))=m}\nonumber\\
&&\qquad=\sum_{x,y\in\SS^n}\PP_{\pi_n}\bigl(J_n(i)=x\bigr)\PP'_{\pi_n}\bigl(J'_n(i)=y\bigr)
\1_{nR_n(x,y)=m}
\\
&&\qquad= 2^{-n} \sum_{x\in\SS^n} \1_{nR_n(x,1)=m}
=2^{-n}\pmatrix{n\cr(n-m)/2}.\nonumber
\end{eqnarray}
Putting all things together, we arrive at the bound
\begin{eqnarray}\label{jiri.25}\nonumber
&&k_n(t)^2\bigl|\E G(V^0,v,2\th)-(\E G(V^0,v,\th)
)^2\bigr|
\nonumber\hspace*{-35pt}\\
&&\quad\leq
\sum_{m= 0}^n\! 2^{-n}\!\pmatrix{n\cr(n-m)/2}\!\!
\biggl(\frac mn\biggr)^p t^2ne^{n\g^2/\b^2} 2C
\exp\biggl(-\frac{n\g^2}{\b^2(1+(m/n)^p)}\biggr)\hspace*{-35pt}\\
&&\qquad{}+
\sum_{m=0}^n\! 2^{-n}\!\pmatrix{n\cr(n-m)/2}\!\! \biggl(\frac mn\biggr)^p
t^2ne^{n\g^2/\b^2} 2C
\exp\biggl(-\frac{n\g^2}{\b^2}\biggr),\nonumber\hspace*{-35pt}
\end{eqnarray}
where we did use that $k_n(t)\th_n \approx t \sqrt n e^{n\g^2/\b^2}$.
Clearly the second term is smaller than the first, so we only need to
worry about the latter. But this term is exactly the term (3.28)
in~\cite{BBC08}, where it is shown that this is smaller than
\begin{equation}\label{jiri.30}
C't^2 n^{1-p/2},
\end{equation}
provided $\g<\zeta(p)$. This provides the assertion of our
Lemma~\ref{jiri.11} and concludes its proof.
\end{pf}

\begin{remark*} The estimate on the second moment we get here allows to
get almost sure convergence only if $p>4$. It is not quite clear
whether this is natural. We were tempted to estimate higher moments to
get improved estimates on the convergence speed. However, any straightforward
application of the comparison methods used here does produce the same order
for all higher moments. We have not been able to think of a tractable way
to improve this result.
\end{remark*}

%s3.4 ###
\subsection{\texorpdfstring{Verification of the second part of Condition \protect\ref{coa2.1}}
{Verification of the second part of Condition (A2-1)}}
For $u,u'>0$ define
\begin{eqnarray}\label{sigma.1}
\tilde\eta_n^t(u)&=& \frac{1}{n}k_n(t)\sum_{x\in\VV_n}\pi
_n(x)(Q^{u}_n(x))^2,
\\
\eta_n^t(u,u')&=&k_n(t)\sum_{x\in\VV_n}\sum_{x'\in\VV_n}\mu
_n(x,x')Q^{u}_n(x)Q^{u'}_n(x'),
\label{sigma.2}
\end{eqnarray}
where $\mu_n$ is the uniform distribution on pairs of vertices $(x,x')$
that are at distance~2 apart,
\begin{equation}\label{sigma.3}
\mu_n(x,x')=
\cases{ 2^{-n}\displaystyle\frac{2}{n(n-1)}, &\quad$\mbox{if } \dist
(x,x')=2,$\vspace*{2pt}\cr
0,&\quad$\mbox{else}.$}
\end{equation}
Equation (\ref{3.prop1.01}) will be verified if we can show that for all $t>0$ and
all $u,u'>0$, both $\tilde\eta_n^t(u)$ and $\eta_n^t(u,u')$
tend to zero, almost surely, respectively, in probability, as
$n\uparrow\infty$.

As before we will do this by first passing to the Laplace transform of
$\eta_n^t(u,u')$. For $v,v'>0$, define
\begin{eqnarray}
\hat\eta_n^t(v,v')&=&\int_{0}^{\infty} du\int_{0}^{\infty
}du'e^{-(uv+u'v')}\eta_n^t(u,u'),
\nonumber
\\[-8pt]
\\[-8pt]
\nonumber
\hat\eta^t(v,v')&=&\int_{0}^{\infty} du\int_{0}^{\infty
}du'e^{-(uv+u'v')}\eta^t(u,u').
\label{sigma.5}
\end{eqnarray}
The reason for considering the two point function $\eta_n^t(u,u')$ is
that, integrating by parts as in (\ref{pspin.12}), $\hat\eta_n^t(v,v')$ takes
the convenient form
\begin{equation}
\qquad\hat\eta_n^t(v,v')
=k_n(t)\sum_{x\in\VV_n}\sum_{x'\in\VV_n}\mu_n(x,x')
\frac{1-\EE_{x}(e^{-vZ_n})}{v}\frac{1-\EE'_{x'}
(e^{-v'Z'_n})}{v'},
\label{sigma.6}
\end{equation}
where $\EE_{x}$ (resp., $\EE'_{x'}$) denotes the expectation with
respect to the law $\PP_x$ of the chain $X_n$ started in $x$
(resp., the law $\PP'_{x'}$ of an independent copy $X'_n$ started in $x'$).

\begin{lemma}\label{sigma.lem.1}
Under the assumptions, and with the notation of Lemma~\ref{pspin.11},
for any $v,v'>0$,
\begin{equation}\label{sigma.7}
\lim_{n\uparrow\infty} \E\hat\eta_n^t(v,v')
=0.
\end{equation}
\end{lemma}

\begin{pf} The key idea of the proof is that the first $\bar\th_n=
2n\ln n $ terms
in the sums $Z_n$ are irrelevant. With this in mind, we define
$W_n\equiv\sum_{j=\bar\th_n}^{\th_n-1} c_n^{-1}\times {\lambda
}_n^{-1}(J_n(j))e_{n,j}$.

Note that
\begin{eqnarray}\label{sigma.8}
vv'\E\hat\eta_n^t(v,v')&=&k_n(t)\E[1-\EE_{\pi_n}
(e^{-vZ_n})]
+k_n(t)\E[1-\EE'_{\pi_n}(e^{-v'Z'_n})\nonumber]\\
&&{}-
k_n(t)\sum_{x\in\VV_n}\sum_{x'\in\VV_n}\mu_n(x,x')
\E\bigl[1-\EE_{x}\EE'_{x'}\bigl(e^{-(vZ_n+v'Z'_n)}\bigr)\bigr]
\nonumber
\\[-8pt]
\\[-8pt]
\nonumber
&\leq&
k_n(t)\E[1-\EE_{\pi_n}(e^{-vZ_n})]
+k_n(t)\E[1-\EE'_{\pi_n}(e^{-v'Z'_n})]\\
\nonumber
&&{}-
k_n(t)\sum_{x\in\VV_n}\sum_{x'\in\VV_n}\mu_n(x,x')
\E\bigl[1-\EE_{x}\EE'_{x'}\bigl(e^{-(vW_n+v'W'_n)}\bigr)
\bigr].\nonumber
\end{eqnarray}
Adding and subtracting the term $\E\EE_{x}( e^{-vW_n})\E
\EE
'_{x'}(e^{-v'W'_n})$
to the term\break $\E\EE_{x}\EE'_{x'}
(e^{-(vW_n+v'W'_n)})$, the right-hand side of (\ref{sigma.8}) is
equal to
\begin{eqnarray}\label{sigma.8'}
&&\quad k_n(t)\E[1-\EE_{\pi_n}(e^{-vZ_n})]
+k_n(t)\E[1-\EE'_{\pi_n}(e^{-v'Z'_n})]\\
&&{}-
k_n(t)\sum_{x\in\VV_n}\sum_{x'\in\VV_n}\mu_n(x,x')
[1-\E\EE_{x}( e^{-vW_n})\E\EE'_{x'}
(e^{-v'W'_n}
)]
\nonumber\\
&&{}+
k_n(t)\sum_{x\in\VV_n}\sum_{x'\in\VV_n}\mu_n(x,x')
\bigl(\E\EE_{x}\EE'_{x'}\bigl(e^{-(vW_n+v'W'_n)}\bigr)\nonumber\\[-2pt]
&&{}-\E\EE_{x}( e^{-vW_n})\E\EE'_{x'}
(e^{-v'W'_n})
\bigr).\nonumber
\end{eqnarray}
After a little reorganisation, (\ref{sigma.8'}) is in turn equal to
\begin{eqnarray}\label{sigma.8''}
&&k_n(t)\E[\EE_{\pi_n}(e^{-vW_n}-e^{-vZ_n}+e^{-v'W_n}-e^{v'Z_n})]\hspace*{-15pt}\\[-2pt]
&&{}+vv'k_n(t)\sum_{x\in\VV_n}\sum_{x'\in\VV_n}\mu_n(x,x')
\E\frac{1-\EE_{x}(e^{-vW_n})}{v}\E\frac{1-\EE'_{x'}(e^{-v'W'_n})}{v'}\nonumber\hspace*{-15pt}\\[-2pt]
&&{}+k_n(t)\sum_{x\in\VV_n}\sum_{x'\in\VV_n}\mu_n(x,x')
\EE_x\EE_{x'}\bigl(\E\bigl(e^{-(vW_n+v'W'_n)}\bigr)\hspace*{-15pt}\nonumber\\[-2pt]
&&{}-\E( e^{-vW_n})\E(e^{-v'W'_n})\bigr).\nonumber\hspace*{-15pt}
\end{eqnarray}
Now one deduces readily from Lemma~\ref{pspin.11} that
\begin{equation}\label{new.1}
k_n(t)\E[\EE_{\pi_n}(e^{-vW_n}-e^{-vZ_n})
]\sim
K_ptv^{\g/\b^2}\bar\th_n/\th_n =O\biggl(\frac{\ln n}n\biggr)
\end{equation}
and tends to zero as $n\uparrow\infty$.
Also by Lemma~\ref{pspin.11},
\begin{eqnarray}\label{new.2}
&&k_n(t)\sum_{x\in\VV_n}\sum_{x'\in\VV_n}\mu_n(x,x')
\E\frac{1-\EE_{x}(e^{-vW_n})}{v}\E\frac{1-\EE
'_{x'}
(e^{-v'W'_n})}{v'}
\nonumber
\\[-9pt]
\\[-9pt]
\nonumber
&&\qquad =O\bigl(1/k_n(t)\bigr)
\end{eqnarray}
and tends to zero even much faster. The last term in (\ref{sigma.8''})
will be controlled
by the Gaussian comparison method similar to the proof of Lemma~\ref{jiri.11}.
Indeed, using the same comparison and interpolation process as in the
proof of
that lemma, we see that for given trajectories $J_n, J_n'$,
\begin{eqnarray}\label{new.4}
&&\E\bigl(e^{-(vW_n+v'W'_n)}\bigr)
-\E( e^{-vW_n})\E(e^{-v'W'_n})
\nonumber
\\[-9pt]
\\[-9pt]
\nonumber
&&\qquad= \int_0^1 \mathop{\sum_{\bar\th_n\leq i<\th_n}}_{
{\th_n+\bar\th_n\leq j<2\th_n}} \L_{ij}^0 \E\frac{\del^2
G(V^h,v,2\th_n)}
{\del v_i\,\del v_j}\,dh.
\end{eqnarray}
To control the right-hand side we will exploit the fact that
after $\OO(n\log n)$ steps, such trajectories are at maximal distance
apart with probability close to one.
Recalling (\ref{graph.dist}), define the distance chain, $D_n$,
on $\{0,1,\ldots,n\}$ through
\begin{equation}\label{sigma.9}
D_n(i)=\dist(J_n(i),J'_n(i)),\qquad i\geq1.
\end{equation}

\begin{lemma}\label{sigma.lem.2} Set $\bar\th_n= 2n\log n$ and $\rho
(n)=\sqrt{K\frac{\log n}{n}}$.
Then, for $K$ sufficiently large,
\begin{equation}\label{sigma.10}
P\biggl(\forall_{\bar\th_n\leq i\leq\th_n}D_n(i)>\frac
{n}{2}\bigl(1-\rho(n)\bigr)
\vert  D_n(0)=2\biggr)\geq1-n^{-8}.\vadjust{\goodbreak}
\end{equation}
Moreover, for any fixed $x,y\in\VV_n$,
\begin{equation}\label{sigma.10'}
P_x\biggl(\exists_{\bar\th_n\leq i\leq\th_n}\dist(J_n(i),y)<\frac
{n}{2}\bigl(1-\rho(n)\bigr)\biggr)\leq\frac{1}{n^4}.
\end{equation}
\end{lemma}

\begin{pf}
Observe on the one hand that, denoting by $\DD_n$, the transition
matrix of
the distance chain $\DD_n$, one has $\DD_n=(\QQ_n)^2$, where $\QQ
_n$ is
the transition matrix
of the Ehrenfest chain on state space $\{0,\ldots,n\}$, namely, the
chain with
transition probabilities $q_n(i,i+1)=\frac{i}{n}$ and $q_n(i,
i-1)=1-\frac{i}{n}$.
On the other hand, it is sufficient in order to prove (\ref{sigma.10'})
to prove it for $y=\mathbf{1}\equiv(1,\ldots,1)$,
and again, the projection chain $\Pi_n(i)\equiv\dist(J_n(i),\mathbf{1})$,
$i\geq1$, is nothing but the Ehrenfest chain on $\{0,\ldots,n\}$.
Both equations (\ref{sigma.10}) and (\ref{sigma.10'}) then follow
from well-known
estimates for the Ehrenfest chain; specifically, see~\cite{Kem}, page
25, equation below~(4.18).
\end{pf}

Let $\AA_n\subset\FF^{J}\times\FF^{J'}$ be the event
$
\AA_n\equiv\{\forall_{\bar\th_n\leq i\leq\th
_n}D_n(i)>\frac
{n}{2}(1-\rho(n))\}
$.
Notice first that on $\AA_n$, by the estimates in Lemma~\ref{jiri.19},
\begin{eqnarray}\label{new.5}
\int_0^1 \mathop{\sum_{\bar\th_n\leq i<\th_n}}_{
{\th_n+\bar\th_n\leq j<2\th_n}} \L_{ij}^0
\E\frac{\del^2 G(V^h,v,2\th_n)}
{\del v_i\,\del v_j}\,dh &\leq&2C \th_n^2 \rho(n) \exp\bigl(-\g^2 n/\b
^2\bigr)
\nonumber
\\[-8pt]
\\[-8pt]
\nonumber
&=&O(k_n(t)^{-2}).
\end{eqnarray}
On the other hand, on $\AA_n^c$, we still have the bound
\begin{eqnarray}\label{new.6}
&&\int_0^1 \mathop{\sum_{\bar\th_n\leq i<\th_n}}_{{\th_n+\bar\th
_n\leq j<2\th_n}} \L_{ij}^0
\E\frac{\del^2 G(V^h,v,2\th_n)}
{\del v_i\,\del v_j}\,dh \nonumber\\
&&\qquad\leq2C \th_n^2 \exp\bigl(-\g^2 n/2\b^2\bigr)
\\
&&\qquad=O\bigl(\th_n /k_n(t)\bigr).\nonumber
\end{eqnarray}
Putting all estimates together we arrive at the assertion of the lemma.
\end{pf}

To prove convergence in probability, respectively, almost surely, we
just need to
use the same concentration estimate as in Lemma~\ref{jiri.11} for the term
$k_n(t) \EE_{\pi_n}(e^{-vW_n}-e^{-vZ_n})$. Finally, the term
$\tilde\eta_n^t(u)$ from (\ref{sigma.1}) can be controlled in exactly
the same way. This establishes
Condition~\ref{coa2.1}.

%s3.5 ###
\subsection{\texorpdfstring{Verification of Condition \protect\ref{coa3.1}}{Verification of Condition (A3-1)}}

To show that Condition~\ref{coa3.1} holds, we again first prove that
the average
of the
right-hand side vanishes as ${\varepsilon}\downarrow0$, and then we prove
a concentration result.

\begin{lemma} \label{a3.1}
Under the assumptions of the theorem, there is a constant $K<\infty$, such
that
\begin{equation}\label{a3.2}
\limsup_{n\uparrow\infty}a_n c_n^{-1} \E\EE_{\pi_n}
{\lambda}_n^{-1}(J_n(0)) e_0 \1_{ {\lambda}_n^{-1}(J_n(0)) e_0\leq
{\varepsilon}c_n}
\leq K{\varepsilon}^{1-\a}.
\end{equation}
\end{lemma}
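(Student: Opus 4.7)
The plan is to reduce the double expectation to a one-dimensional computation, apply the layer-cake formula, and establish an $n$-uniform Pareto-type bound on the tail of a single clock increment via an explicit saddle-point change of variables, in the spirit of the calculations already carried out for Lemmas~\thv(pspin.11) and~\thv(jiri.11).

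First I would use stationarity and symmetry. Since $\pi_n$ is the uniform invariant distribution of $J_n$ on $\S_n$, and $H_n(x)\sim\mathcal N(0,n)$ for every $x\in\S_n$, the joint law of $c_n^{-1}\l_n^{-1}(J_n(1))e_1$ under $\P\otimes\PP_{\pi_n}$ coincides with that of the scalar random variable
$$W_n \equiv e^{\b\sqrt n Z-\g n}\,e,$$
where $Z\sim\mathcal N(0,1)$ and $e\sim\mathrm{Exp}(1)$ are independent. With $\a=\g/\b^2$, the lemma reduces to $\limsup_n a_n\E[W_n\1_{W_n\leq\e}]\leq K\e^{1-\a}$, and integration by parts gives
$$a_n\E[W_n\1_{W_n\leq\e}] = \int_0^\e a_n\P(W_n>x)\,dx-\e\cdot a_n\P(W_n>\e)\leq \int_0^\e a_n\P(W_n>x)\,dx,$$
so it suffices to produce an $n$-uniform Pareto bound on $a_n\P(W_n>x)$.

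Conditioning on $Z$ and integrating out $e$ yields $\P(W_n>x)=\E_Z[\exp(-x e^{\g n-\b\sqrt n Z})]$. The change of variables $u=xe^{\g n-\b\sqrt n Z}$, expansion of the quadratic in the Gaussian density of $Z$, and the identity $\a=\g/\b^2$ together give
\begin{equation*}
\P(W_n>x)=\frac{e^{-n\g^2/(2\b^2)}\,x^{-\a}}{\b\sqrt{2\pi n}}\int_0^\infty\frac{e^{-u}}{u^{1-\a}}\exp\!\left(-\frac{(\ln(x/u))^2}{2\b^2 n}\right)du.
\end{equation*}
Bounding the last Gaussian factor by $1$ and evaluating $\int_0^\infty u^{\a-1}e^{-u}\,du=\Gamma(\a)$, the prefactors combine cleanly with $a_n=n^{1/2}e^{n\g^2/(2\b^2)}$ to give
$$a_n\P(W_n>x)\leq \frac{\Gamma(\a)}{\b\sqrt{2\pi}}\,x^{-\a}\qquad\text{for every $n\in\N$ and $x>0$.}$$

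Since the hypotheses of Theorem~\ref{p:main} force $\a<1$, the function $x\mapsto x^{-\a}$ is integrable on $(0,\e)$, and integrating the uniform bound yields the lemma with $K=\Gamma(\a)/(\b\sqrt{2\pi}(1-\a))$. The only genuinely delicate point is the uniform Pareto estimate itself: direct Markov/Chernoff bounds $\P(W_n>x)\leq x^{-s}\E[W_n^s]$ do \emph{not} suffice, since for every $s>0$ they leave behind an unmanageable $\sqrt n$ factor after multiplication by $a_n$ (the tail of $W_n$ is lognormal rather than exactly Pareto at finite $n$). What makes the plan work is that the saddle-point substitution $u=xe^{\g n-\b\sqrt n Z}$ isolates the exact Gaussian normalisation $n^{-1/2}e^{-n\g^2/(2\b^2)}$, whose reciprocal is precisely $a_n$.
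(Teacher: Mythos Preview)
Your argument is correct and in fact somewhat slicker than the paper's own proof. The paper attacks the truncated expectation directly: it writes
\[
\int_0^\infty x e^{-x}\,dx\int_{-\infty}^{\frac{\ln c_n+\ln(\e/x)}{\b\sqrt n}} e^{-z^2/2+\b\sqrt n z}\,dz,
\]
completes the square in $z$, splits the $x$-integral at $x=n^2$ to guarantee the upper integration limit tends to $-\infty$, and then invokes the Mills-ratio bound $\int_u^\infty e^{-z^2/2}dz\le u^{-1}e^{-u^2/2}$ on the resulting Gaussian tail; the leading behaviour $c_n a_n^{-1}(\e/x)^{1-\a}$ emerges after some algebra on the exponent. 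You instead pass to the tail $\P(W_n>x)$ via layer-cake and make the substitution $u=x e^{\g n-\b\sqrt n Z}$, which factors the density exactly as $a_n^{-1}x^{-\a}$ times a Gaussian correction bounded by $1$. This buys you a bound that is genuinely uniform in $n$ (not merely asymptotic), avoids any splitting of the integration domain, and produces an explicit constant $K=\Gamma(\a)/(\b\sqrt{2\pi}(1-\a))$. The paper's route is more hands-on but needs the auxiliary observation that the $x>n^2$ contribution is negligible; your route sidesteps this entirely. Both, of course, hinge on $\a<1$ to make $x^{-\a}$ integrable near $0$.
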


\begin{pf}
The proof is through explicit estimates. We must control the integral
\begin{eqnarray}\label{a3.3}
&&\int_0^\infty xe^{-x}\,dx\int_{-\infty}^\infty
e^{-{z^2}/2} \1_{xe^{\b\sqrt n z}\leq{\varepsilon}c_n}
e^{\b\sqrt n z}\,dz\nonumber\\
&&\qquad=
\int_0^\infty xe^{-x}\,dx\biggl[
\int_{-\infty}^{{(\ln c_n+\ln({\varepsilon}/x))}/{(\b\sqrt n})}
e^{-{z^2}/2+\b\sqrt n z}\,dz\biggr]\\
&&\qquad=
\int_0^\infty xe^{-x}\,dx\biggl[e^{\b^2n/2}
\int_{-\infty}^{({(\ln c_n+\ln({\varepsilon}/x))}/{(\b\sqrt n)})-\b
\sqrt n}
e^{-{z^2}/2}\,dz\biggr].\nonumber
\end{eqnarray}
Now for our choice $c_n=\exp(\g n)$, the upper integration limit in
the $z$-integral is
\begin{eqnarray}\label{a3.4}
&&\frac{\ln c_n+\ln({\varepsilon}/x)}{\b\sqrt n}-\b\sqrt n
\nonumber
\\[-8pt]
\\[-8pt]
\nonumber
&&\qquad=\sqrt n\biggl(\frac\g\b-\b\biggr) +\frac{\ln{\varepsilon}-\ln
x}{\b\sqrt n}.
\end{eqnarray}
Thus, for any $\g<\b^2$, this tends to $-\infty$ uniformly for, say,
all $x\leq n^2$.
We therefore decompose the $x$-integral in the domain $x\leq n^2$ and
its complement, and use first that
\begin{eqnarray}\label{a3.5}
&&\int_{n^2}^\infty x e^{-x}\,dx\int
e^{-{z^2}/2} \1_{xe^{\b\sqrt n z}\leq{\varepsilon}c_n}
e^{\b\sqrt n z}\,dz
\nonumber
\\[-8pt]
\\[-8pt]
\nonumber
&&\qquad\leq{\varepsilon}n^2 c_n e^{-n^2},
\end{eqnarray}
which tends to zero, as $n\uparrow\infty$. For the remainder we use the
bound
%
%e8 ###
%
\begin{equation}
\int_u^\infty e^{-z^2/2}\leq
\frac{1}{u}e^{-u^2/2}.
\label{ext-2.10}
\end{equation}
This yields
\begin{eqnarray}\label{a3.6}
&&e^{\b^2n/2}
\int_{-\infty}^{({(\ln c_n+\ln({\varepsilon}/x))}/{(\b\sqrt n)})-\b
\sqrt n}
e^{-{z^2}/2}\,dz
\nonumber\\
&& \qquad\leq
e^{\b^2 n/2} \frac{
\exp(-1/2(\sqrt n(\b-\g/\b) -
{(\ln{\varepsilon}-\ln x)}/{\b\sqrt n})^2)}
{(\b-\b^{-1}\g)\sqrt n-{(\ln{\varepsilon}-\ln x)}/{\b\sqrt
n}}
\nonumber
\\[-8pt]
\\[-8pt]
\nonumber
&&\qquad=\frac{\exp( -n({\g^2}/{2\b^2})+n\g) }
{\sqrt n(\b-\g/\b)+o(1)}\exp\bigl(-(\g/\b^2-1)\ln( {\varepsilon}/x)
+O(n^{-1/2})\bigr)\\
&&\qquad
= c_n a_n^{-1}
\frac{1}
{\b-\g/\b+o(1)}\exp\bigl(-(\g/\b^2-1)\ln( {\varepsilon}/x)
+O(n^{-1/2})\bigr).\nonumber
\end{eqnarray}
Hence
\begin{eqnarray}\label{a3.7}
&&\limsup_{n\uparrow\infty}a_nc_n^{-1}
\int_0^\infty xe^{-x}\,dx\int_{-\infty}^\infty
e^{-{z^2}/2} \1_{xe^{\b\sqrt n z}\leq{\varepsilon}c_n}
e^{\b\sqrt n z}\,dz
\nonumber
\\[-8pt]
\\[-8pt]
\nonumber
&&\qquad\leq
\frac1{\b-\g/\b}{\varepsilon}^{1-\a} \int_0^\infty x^{\a}e^{-x}
\,dx,
\end{eqnarray}
where $\a=\g/\b^2$. This yields the assertion of the lemma.
\end{pf}

To complete the proof, we need a concentration
estimate.
The first step is a simple Gaussian bound.

\begin{lemma} \label{anton.1}
Let $X,Y$ be centered normal Gaussian random variables with covariance
$\E X Y = c$.
Then, for any ${\varepsilon},{\varepsilon}'>0$,
\begin{eqnarray}\label{anton.3}
\qquad&&\frac{\E( e^{\b\sqrt n X}\1_{e^{\b\sqrt n X}\leq c_n {\varepsilon}}
e^{\b\sqrt n Y}\1_{e^{\b\sqrt n Y}\leq c_n{\varepsilon}'})
}{
\E( e^{\b\sqrt n X}\1_{e^{\b\sqrt n X}\leq c_n{\varepsilon}})
\E( e^{\b\sqrt n Y}\1_{e^{\b\sqrt n Y}\leq c_n{\varepsilon}'})}
\nonumber
\\[-8pt]
\\[-8pt]
\nonumber
&&\qquad\leq
\frac{\exp(|c| {(2\g^2 n +\g(\ln{\varepsilon}+\ln
{\varepsilon}'))}/{(2\b
^2(1+|c|))})}
{\sqrt{1-c^2}-(\g/{\b^2})\sqrt{({1-|c|)}/{(1+|c|)}}}
\bigl(1+O(1/n)\bigr).
\end{eqnarray}
\end{lemma}

\begin{pf}
The numerator on the left-hand side of (\ref{anton.3}) equals (we assume
$c\geq0$ below,
but the same estimate with $c$ replaced by $-c$ can be obtained for $c<0$)
\begin{eqnarray*}
\label{anton.5}
&&\frac1{2\pi}\int_{-\infty}^{{( \g n +\ln{\varepsilon})}/{(\b
\sqrt n)}}
\int_{-\infty}^{{ (\g n +\ln{\varepsilon}')}/{(\b\sqrt n)}}
\frac1{\sqrt{1-c^2}}e^{-{(z_1^2+z^2_2+2cz_1z_2)}/{(2(1-c^2))}}\nonumber\\
&&\hspace*{143pt}\qquad{}\times e^{
\sqrt n(z_1+z_2)}
\,dz_1\,dz_2\\
&&\qquad=
\frac1{2\pi\sqrt{1-c^2}}\int_{-\infty}^{{( \g n +\ln
{\varepsilon})}/{\b
\sqrt n}}
\int_{-\infty}^{{ (\g n +\ln{\varepsilon}')}/{(\b\sqrt n)}}
e^{\b\sqrt n(z_1+z_2)}e^{-{(z_1^2+z^2_2)}/2}\\
&&\qquad\quad{}\times e^{-{(c(z_1-z_2)^2-c(1-c)(z_1^2+z^2_2))}/{2(1-c^2)}}\,dz_1\,dz_2
\\
&& \qquad
\leq
\frac1{2\pi\sqrt{1-c^2}}\int_{-\infty}^{{( \g n +\ln
{\varepsilon})}/{(\b
\sqrt n)}}
\int_{-\infty}^{{ (\g n +\ln{\varepsilon}')}/{(\b\sqrt n)}}
e^{\b\sqrt n(z_1+z_2)}e^{-{(z_1^2+z^2_2)}/2}\\
&&\qquad\quad{}\times e^{+{(c(z_1^2+z^2_2))}/{(2(1+c))}}\,dz_1\,dz_2\\
&&
\qquad=\frac1{ 2\pi\sqrt{1-c^2}}
\biggl(\int_{-\infty}^{{( \g n +\ln{\varepsilon})}/{(\b\sqrt n)}}
e^{\b\sqrt n z}e^{(-{z^2}/{(2(1+c))})}\,dz\biggr)\\
&&\quad\qquad{}\times \biggl(\int_{-\infty}^{{( \g n +\ln{\varepsilon}')}/{(\b\sqrt n)}}
e^{\b\sqrt n z}e^{-({z^2}/{(2(1+c))})}\,dz\biggr).
\end{eqnarray*}
Using standard estimates on the asymptotics of one-dimensional
Gaussian integrals the claimed
result follows after some straightforward computations.
\end{pf}

We will now use Lemma~\ref{anton.1} to prove the desired concentration
estimate.

\begin{lemma}\label{anton.10}
With the notation above,
\begin{eqnarray}\label{anton.11}
&&\E\bigl(\EE_{\pi_n} {\lambda}_n^{-1}(J_n(0))e_0\1_{{\lambda}
_n^{-1}(J_n(0))e_0\leq
{\varepsilon}c_n}\bigr)^2\nonumber\\
&&\quad{}- \bigl(\E\EE_{\pi_n} {\lambda}_n^{-1}(J_n(0))e_0\1_{{\lambda}
_n^{-1}(J_n(0))e_0\leq
{\varepsilon}c_n}\bigr)^2
\\
&&\qquad\leq C n^{1-p/2} \bigl(\E\EE_{\pi_n} {\lambda
}_n^{-1}(J_n(0))e_0\1
_{{\lambda}
_n^{-1}(J_n(0))e_0\leq
{\varepsilon}c_n}\bigr)^2.\nonumber
\end{eqnarray}
\end{lemma}

\begin{pf}
Writing out everything explicitly, we have\vspace*{1pt}
\begin{eqnarray}\label{anton.12}
&&\E\bigl(\EE_{\pi_n} {\lambda}_n^{-1}(J_n(0))e_0\1_{{\lambda}
_n^{-1}(J_n(0))e_0\leq
{\varepsilon}c_n}\bigr)^2\nonumber\\
&&\quad{}- \bigl(\E\EE_{\pi_n} {\lambda}_n^{-1}(J_n(0))e_0\1_{{\lambda}
_n^{-1}(J_n(0))e_0\leq
{\varepsilon}c_n}\bigr)^2
\nonumber\\
&&\qquad=2^{-2n} \sum_{x,x'\in\S_n} \int dy_1\,dy_2e^{-y_1-y_2} y_1y_2\\
&&\qquad\quad{}\times\bigl(\E\bigl( e^{\b(H_n(x)+H_n(x'))} \1_{e^{\b H_n(x)}\leq c_n
{\varepsilon}/y_1} \1_{e^{\b H_n(x')}\leq c_n
{\varepsilon}/y_2}\bigr)\nonumber\\
&&\hspace*{1pt}\phantom{\times\bigl(}\qquad\quad{} -
\E\bigl( e^{\b H_n(x)} \1_{e^{\b H_n(x)}\leq c_n
{\varepsilon}/y_1}\bigr)\E\bigl( e^{\b H_n(x')} \1_{e^{\b
H_n(x')}\leq c_n
{\varepsilon}/y_2}\bigr)\bigr).\nonumber
\end{eqnarray}
Now the last terms depend only on the covariance of $H_n(x)$ and
$H_n(x')$, that is,
on $R_n(x,x')$. Using Lemma~\ref{anton.1}, we get, when $R_n(x,x')^p=c$,\vspace*{1pt}
\begin{eqnarray}\label{anton.13}
&&
\int dy_1\,dy_2e^{-y_1-y_2} y_1y_2
\nonumber\\
&&\quad{}\times\bigl(\E\bigl( e^{(\b H_n(x)+H_n(x'))} \1_{e^{\b
H_n(x)}\leq c_n
{\varepsilon}/y_1} \1_{e^{\b H_n({{\sigma}'})}\leq c_n
{\varepsilon}/y_2}\bigr)
\nonumber
\\[-8pt]
\\[-8pt]
\nonumber
&&\hspace*{1pt}\phantom{\times\bigl(}\quad{}-
\E\bigl( e^{\b H_n(x)} \1_{e^{\b H_n({\sigma})}\leq c_n
{\varepsilon}/y_1}\bigr)\E\bigl( e^{\b H_n({\sigma}')} \1_{e^{\b
H_n({\sigma
}')}\leq c_n
{\varepsilon}/y_2}\bigr)\bigr)\\
&&\qquad\leq
(e^{cn ({\g^2}/{(\b^2(1+c))})}-1)
\bigl(\E\EE_{\pi_n} e^{\b H_n({\sigma})}e_1\1_{e^{\b H_n({\sigma
})}e_1\leq{\varepsilon}
}
\bigr)^2 \bigl(1+O(c)\bigr).\nonumber
\end{eqnarray}
Thus we have to control
\begin{eqnarray}\label{anton.14}
&&2^{-2n} \sum_{m\in\{-1,-1+{2}/{n},\ldots,1-
{2}/{n},1
\}} \sum_{x,x'\in\S_n} \1_{R_n(x,x')=m}
\bigl(e^{ m^p n ({\g^2}/(\b^2(1+m^p)))}-1\bigr)
\nonumber
\\[-8pt]
\\[-8pt]
\nonumber
&&\qquad= \sum_{m\in\{-1,-1+{2}/{n},\ldots,1-
{2}/{n},1\}}
2^{-n}\pmatrix{n\cr n(m+1)/2}
\bigl(e^{ m^p n ({\g^2}/{(\b^2(1+m^p))})}-1\bigr).
\end{eqnarray}
The analysis of the last sum can be carried out in the same way as was
done in
\cite{BBC08} for a very similar sum. It yields that
\begin{equation}\label{anton.15}
\sum_{m=-1}^1 2^{-n}\pmatrix{n\cr n(m+1)/2}
\bigl(e^{ m^p n {\g^2}/(\b^2(1+m^p))}-1\bigr) = C n^{1-p/2}.
\end{equation}
\upqed\end{pf}

%s3.6 ###
\subsection{Conclusion of the proof}
Consider first the case $p>4$.
Lemmata~\ref{pspin.11} and~\ref{jiri.11}, together with
Chebyshev's inequality and the
Borel--Cantelli lemma, establish that for each $v>0$,
\begin{equation}
\lim_{n\rightarrow\infty}\hat\nu_n^{t}(v)=\hat\nu
^{t}(v)=K_pv^{\g/\b
^2-1} ,\qquad\P\mbox{-a.s.}
\end{equation}
Together with the monotonicity of $\hat\nu^{t}_n(v)$ and the continuity
of the limiting function
$\hat\nu^{t}(v)$, this implies that there exists a subset $\O
_1\subset\O
$ of
the sample space $\O$ of the~$\t$s with the property that $\P(\O_1)=1$,
and such that,
on $\O_1$,
\begin{equation}
\lim_{n\rightarrow\infty}\hat\nu_n^{t}(v)=\hat\nu^{t}(v)
\qquad\forall v>0 .
\label{1}
\end{equation}
Finally, applying Feller's extended continuity theorem for Laplace
transforms of
(not necessarily bounded) positive measures (see~\cite{feller2},
Theorem 2a, Section XIII.1, page 433)
we conclude that, on $\O_1$,
\begin{equation}
\lim_{n\rightarrow\infty}\nu_n^{t}(u,\infty)=\nu^{t}(u,\infty)=K_p
u^{-\g/\b^2}
\qquad\forall u> 0 .
\label{2}
\end{equation}

In the cases $p=3,4$, where our estimates give only convergence in
probability, we obtain convergence of $\nu_n^t(u,\infty)$ in probability,
for example, by using the characterization of convergence of
probability in terms of
almost sure convergence of sub-sequences; see, for example,~\cite{RW1},
Section II. 19.
This allows us to reduce the proof in this case to that of the case of
almost sure convergence.

Thus we have established Conditions~\ref{coa1.1},~\ref{coa2.1} and~\ref{coa3.1} under the
stated conditions on the parameters $\g,\b,p$, and Theorem~\ref{p:main}
follows from Theorem~\ref{main.1}.

%s3.7 ###
\subsection{Consequences for correlation functions}

We now turn to the proof of Theorem~\ref{t:aging}.

\begin{pf}
The proof of this theorem relies on the following simple estimate.
Let us denote by $\RR_n$ the range of the coarse grained and rescaled
clock process $ S^b_n$. The argument of~\cite{BBC08} in the proof of
Theorem 1.2 that
the event $A^{\varepsilon}_n(s,t)\cap\{\RR_n\cap(s,t)\neq
\varnothing\}$ has vanishing\vadjust{\goodbreak}
probability carries over unaltered. However,
while in their case, $A^{\varepsilon}_n(s,t)\supset\{\RR_n\cap
(s,t)=\varnothing
\}$,
was obvious, due to the fact that the coarse graining was done on a scale
$o(n)$; this is not immediately clear in our case, where the number of steps
within a block is of order $n^2$. What we have to show is that
if the process spends the whole time from $s$ to $t$ within one bloc, then
almost all of this time is spent, without interruption, within a small ball
of radius ${\varepsilon}n$.

To show that this holds, we will need to establish two facts.

\begin{fact}\label{fa1}
The first fact concerns the random environment.
We will show that, if
a trajectory within a block of length $\th_n\sim n^2$ hits a point
where the
random variables $H_n$ are ``big,'' that is, of order $an$, then with
overwhelming probability, all other sites with ``big'' $H_n$s this
piece of path meets are
within a distance ${\varepsilon}n$ from this point. In other words,
within one block,
the path will never hit two distinct clusters of large values of the
random field.
\end{fact}

\begin{fact}\label{fa2}
The second fact concerns the properties of the random walk $J_n$. We
will show
that the random walk that hits such a cluster of large values will
spend there,
at most, a time of order ${\varepsilon}n$, and it will not leave
that cluster and return to it later within $\th_n$ steps.
\end{fact}

These two properties imply our claim.

The proof of the first fact relies on the following elementary estimate
for correlated Gaussian variables.
Note that the following bound is not optimal but good enough for our
purposes.

\begin{lemma}\label{simple.1}
Let $X,Y$ be standard Gaussian variables with covariance $\cov(X,Y)= 1-c$,
$0<c< 1/4$. Then for $a>0$,
\begin{eqnarray}\label{simple.2}
\P\bigl(X>a, Y>a(1-c/4)\bigr)&\leq&
\frac1{a 2\pi\sqrt c}\exp\biggl(-\frac{a^2}2\biggl(1+\frac
c{32}
\biggr)\biggr)
\nonumber
\\[-8pt]
\\[-8pt]
\nonumber
&&{}+\frac1 {\sqrt{2\pi} a }\exp\biggl(-\frac{a^2}2
(1+c
)\biggr).
\end{eqnarray}
\end{lemma}

\begin{pf} Note that the variables $X,Y$ have the joint density
\begin{equation}\label{simple.3}
\frac1{2\pi\sqrt{2c-c^2}} \exp\biggl(-\frac{x^2}2 -\frac
{(y-(1-c)x)^2}{4c-2c^2}\biggr).
\end{equation}
Next,
\begin{eqnarray}\label{simple.4}
&&\P\bigl(X>a, Y>a(1-c/2)\bigr)
\nonumber
\\[-8pt]
\\[-8pt]
\nonumber
&&\qquad\leq
\P\bigl(X>a, |Y-(1-c)X|>ac/4\bigr)+
\P\biggl(X>a \frac{1-c/2}{1-c}\biggr).
\end{eqnarray}
The result is now a trivial application of the standard tail estimates for
Gaussian integrals.\vadjust{\goodbreak}
\end{pf}

This lemma has the following corollary, which is a precise statement of
Fact~\ref{fa1}.

\begin{corollary}\label{gauss.1}
Let $H_n({\sigma})$ be the Gaussian process defined in (\ref
{pspin.1}). Let
$\MM_n\subset\S_n$ be arbitrary. Then, for ${\varepsilon}>0$ and
all $n$ large enough,
\begin{eqnarray}\label{gauss.2}
&&\P\bigl(\exists_{x,x'\in\MM_n}: R_n(x,x')<1-{\varepsilon}\mbox{ and}\nonumber\\
&&\quad
H_n(x)\geq a n \land H_n(x')\geq a n (1-p{\varepsilon}/4)\bigr)
\\
&&\qquad\leq|\MM_n|^2 e^{-na^2/2 }e^{-na^2 p{\varepsilon}/64}.\nonumber
\end{eqnarray}
\end{corollary}

A precise version of the second fact is the following lemma.

\begin{lemma}\label{srw.101}
Define the events
\begin{equation}\label{srw.100}
\WW_{{\varepsilon}}(k)\equiv
\exists_{\{\th_n k\leq i<j-{\varepsilon}n\leq\th_n(k+1)\}}
\{R_n(J_n(i),J_n(j))\geq1-{\varepsilon}\}.
\end{equation}
Then, for any ${\varepsilon}<1/4$, there exists a constant $C<\infty
$, such that
for all $n$ large enough, there exists $c_{\varepsilon}>0$, such that
\begin{equation}\label{srw.102}
P_{\pi_n}
(\WW_{\varepsilon}(k))
\leq Ce^{-c_{\varepsilon}n}.
\end{equation}
\end{lemma}

\begin{pf}
We clearly have to show only that an estimate
of the form (\ref{srw.102}) holds, for any $j\geq{\varepsilon}n$ for
the probability
$P_{\pi_n}( R_n(J_n(0),J_n(j))\geq1-{\varepsilon})$. We may also assume
that $J_n(0)=\mathbf{1}\equiv(1,\ldots,1)$. Observing that
$R_n(x,x')= 1-2\dist(x,x')$ [see (\ref{graph.dist})], we have
$
P( R_n((\mathbf{1},J_n(j)))\geq 1-{\varepsilon})
= P( \dist((\mathbf{1},\break J_n(j)))< {\varepsilon}/2)
$.
Now we saw in the proof of Lemma~\ref{sigma.lem.2} that the chain
$\Pi_n(j)\equiv\dist(\mathbf{1},J_n(j))$, $j\geq1$, is the Ehrenfest
chain on $\{0,\ldots,n\}$,
and again the desired exponential estimate follows from well-known
estimates for the latter chain; see, for example,~\cite{Kem}.
\end{pf}

We now continue the proof of Theorem~\ref{t:aging}.
As remarked above,
\begin{eqnarray}\label{notso.1}
\PP_{\pi_n}(A_n^{\varepsilon}(s,t))
&=&\PP_{\pi_n}\bigl(A_n^{\varepsilon}(s,t)\cap\bigl\{\RR_n\cap
(s,t)=\varnothing\bigr\}
\bigr)
\nonumber
\\[-8pt]
\\[-8pt]
\nonumber
&&{}+\PP_{\pi_n}\bigl(A_n^{\varepsilon}(s,t)\cap\bigl\{\RR_n\cap
(s,t)\neq
\varnothing\bigr\}
\bigr),
\end{eqnarray}
where the second term tends to zero.
Next we observe that
\begin{eqnarray}\label{simple.10}
&&\PP_{\pi_n}\bigl(A_n^{\varepsilon}(s,t)\cap\bigl\{\RR_n\cap
(s,t)=\varnothing\bigr\}
\bigr)
\nonumber
\\[-8pt]
\\[-8pt]
\nonumber
&&\quad=\PP_{\pi_n}\bigl(\RR_n\cap(s,t)=\varnothing\bigr)
-\PP_{\pi_n}\bigl((A_n^{\varepsilon}(s,t))^c\cap\bigl\{\RR_n\cap
(s,t)=\varnothing\bigr\}\bigr).
\end{eqnarray}
Here the first term is what we want.
The event in the second term occurs only if the block-variable, that
ensures that the
event $\RR_n\cap(s,t) =\varnothing$ occurs, contains a very long
block or
two sub-blocks contributing to
its internal ``clock-time.''
Corollary~\ref{gauss.1} and Lemma~\ref{srw.101} will be used to prove
that this tends to zero.
To do so, it is convenient to first show that the jump over $(s,t)$ is
realized before $k_n(N)$ steps, with high probability.\vadjust{\goodbreak}

For any $N<\infty$, we have
\begin{eqnarray}\label{notso.2}
&&\PP_{\pi_n}\bigl((A_n^{\varepsilon}(s,t))^c\cap\bigl\{\RR
_n\cap
(s,t)=\varnothing
\bigr\}
\bigr)
\nonumber\\
&&\qquad=\sum_{k=0}^{k_n(N)-1}
\PP_{\pi_n}\bigl(((A_n^{\varepsilon}(s,t))^c)\cap\bigl\{
(s,t)\subset
\bigl( S^b_n(k),S^b_n(k+1)\bigr)\bigr\}\bigr)\\
&&\qquad\quad{}+
\sum_{k=k_n(N)}^\infty
\PP_{\pi_n}\bigl(((A_n^{\varepsilon}(s,t))^c)\cap\bigl
\{
(s,t)\subset
\bigl(S^b_n(k),S^b_n(k+1)\bigr)\bigr\}\bigr).\nonumber
\end{eqnarray}
The second term is bounded by
\begin{eqnarray}\label{notso.3}
&&\sum_{k=k_n(N)}^\infty
\PP_{\pi_n}\bigl(((A_n^{\varepsilon}(s,t))^c)\cap\bigl
\{
(s,t)\subset
\bigl( S^b_n(k), S^b_n(k+1)\bigr)\bigr\}\bigr)
\nonumber
\\[-8pt]
\\[-8pt]
\nonumber
&&\qquad\leq\PP_{\pi_n} \bigl( S^b_n(N)\leq s\bigr)\rightarrow
\PP\bigl(V_{\gamma/\beta^2}(N)\leq s\bigr),
\end{eqnarray}
where convergence is almost sure (respectively, in probability, if $p=3$ or $p=4$)
with respect to the environment, due to the already established
convergence of
$S_n^b$.
The last probability can be made as small as desired by choosing $N$
sufficiently large. It remains to deal with the first sum on the
right-hand side of (\ref{notso.2}).

For a given trajectory $J_n$, define the event, $\GG_\rho(k)\subset
\FF
^\t$,
that in
block number~$k$ (of size
$\th_n$) two points contribute
significantly to the clock that have overlap smaller then $1-\rho$.
More precisely,
\begin{eqnarray}\label{simple.12}
\GG_\rho(k)&\equiv&\mathop{\bigcup_{k\th_n\leq i<j<(k+1)\th_n}}_{
R_n(J_n(i),J_n(j))\leq1-\rho}\biggl\{ {\lambda}^{-1}_n(J_n(i))e_{n,i}
\geq\frac{c_n}{\th_n}(t-s) \biggr\}
\nonumber
\\[-8pt]
\\[-8pt]
\nonumber
&&{}\hspace*{80pt}\cap\biggl\{ {\lambda
}^{-1}_n(J_n(j))e_{n,j}
\geq\frac{c_n}{\th_n}n^{-1}\biggr\}.
\end{eqnarray}
Note that Corollary~\ref{gauss.1} implies that the probability of this event,
with respect to the law $\P$, is bounded nicely and uniformly in the
variables $J$.
Namely,
\begin{equation}\label{notso.6}
\E\PP_{\pi_n}(
\GG_\rho(k))\leq a_n^{-1} e^{-{\delta}n},
\end{equation}
for some ${\delta}>0$ depending on the choice of $\rho$. The simplest
way to
see this
is to use that the probability that one of the $e_{n,i}$ is larger than $n^2$
is smaller than $\exp(-n^2)$, and then use the bound from Corollary
\ref{gauss.1}.

On the other hand, on the event $\GG_\rho(k)^c$,
$ (A_n^{\varepsilon}(s,t))^c\cap\{(s,t)\subset
(S^b_n(k),\break S^b_n(k+1))\}$ can only happen if
the following are true:
first, there still must exist some $i$ such that
$ {\lambda}^{-1}_n(J_n(i))e_{n,i}
\geq c_n(t-s)\th_n^{-1}$, and second, the random walk must realize the event
considered in Lemma~\ref{srw.101}.

By these considerations, we have the bound
\begin{eqnarray}\label{simple.15}\qquad
&&\E\Biggl(\sum_{k=0}^{k_n(N)}\PP_{\pi_n}
\bigl((A_n^{\varepsilon}(s,t))^c\cap\bigl\{(s,t)\subset
\bigl(S^b_n(k),S^b_n(k+1)\bigr)\bigr\}\bigr)\Biggr)
\nonumber\\
&&\qquad\leq\sum_{k=0}^{k_n(N)} \E\bigl(\PP_{\pi_n}(
\GG_\rho(k))+\PP_{\pi_n}\bigl(\bigl\{\exists_{k\th_n\leq
i<(k+1)\th_n}
{\lambda}_n^{-1}(J_n(i))e_{n_i}>c_n \th_n^{-2}\bigr\}\\
&&\hspace*{258pt}\qquad{} \cap\WW
_{{\varepsilon}
}(k)\bigr)
\bigr).\nonumber
\end{eqnarray}

Next, we use Lemma~\ref{srw.101} and similar reasoning as before to
see that
\begin{eqnarray}\label{notso.7}
&&\E\PP_{\pi_n} \bigl(\bigl\{\exists_{k\th_n\leq i<(k+1)\th_n}
{\lambda}_n^{-1}(J_n(i))e_{n_i}>c_n \th_n^{-2}\bigr\} \cap\WW
_{{\varepsilon}
}(k)\bigr)
\nonumber\\
&&\qquad=
\P\bigl(\exists_{k\th_n\leq i<(k+1)\th_n}
{\lambda}_n^{-1}(J_n(i))e_{n_i}>c_n \th_n^{-2}\bigr)P_{\pi_n}( \WW
_{{\varepsilon}
}(k))
\nonumber
\\[-8pt]
\\[-8pt]
\nonumber
&&\qquad
\leq\th_n^2 \P\bigl(e^{\b H_n(x)}>c_n n^{-4}\bigr)
Ce^{-n{\varepsilon
}c_{\varepsilon}
} +
\th_n e^{-n^2}\\
&&\qquad\leq\th_n^2 a_n^{-1} n^{\g\sqrt n \b^{-2}}
e^{-n{\varepsilon}c_{\varepsilon}
} +
\th_n e^{-n^2}.\nonumber
\end{eqnarray}
Combining all this,
we see that
\begin{equation}\label{notsosimple.15}
\E\Biggl(\sum_{k=0}^{k_n(N)}\PP_{\pi_n}
\bigl((A_n^{\varepsilon}(s,t))^c\cap\bigl\{(s,t)\subset
\bigl(S^b_n(k),S^b_n(k+1)\bigr)\bigr\}\bigr)\Biggr)
\leq C Ne ^{-{\delta}n},\hspace*{-35pt}
\end{equation}
for some positive ${\delta}$, whatever the choice of ${\varepsilon}$.
But this estimate implies that the term (\ref{simple.10}) converges to zero
$\P$-almost surely, for any choice of $N$.
Hence the result is obvious from the $J_1$ convergence of $S^b_n$.
\end{pf}

% imsref loaded by akundreckaite, 2012-02-15 13:20:06
%

\printaddresses

\end{document}